\theoremstyle{plain}      
\newtheorem{thm}{Theorem}[section]     
\newtheorem{theorem}[thm]{Theorem}     
\newtheorem{corollary}[thm]{Corollary}     
\newtheorem{lemma}[thm]{Lemma}     
\newtheorem{proposition}[thm]{Proposition}
\theoremstyle{remark}      
\newtheorem{example}[thm]{Example} 
\newtheorem{remark}[thm]{Remark} 
\theoremstyle{definition}
\DeclareMathOperator{\Arg}{Arg}
\def\C{\mathbb{C}}
\def\K{\mathbb{K}}
\def\R{\mathbb{R}}
\def\T{\mathbb{T}}
\def\Z{\mathbb{Z}}
\renewcommand{\P}{{\mathbb P}}
\def\scrA{\mathscr{A}}
\def\scrP{\mathscr{P}}
\def\scrT{\mathscr{T}}
\def\coscrA{{\it co}\mathscr{A}}
\def\lcoscrA{\widehat{{\it co}}\mathscr{A}}
\newcommand{\nca}{{\mathit{Nco}\mathscr{A}}}
\newcommand{\lnca}{\widehat{{\mathit{Nco}}\mathscr{A}}}
\def\lsh{\widehat{{\it sh}}}
\DeclareMathOperator{\Log}{Log} 
\DeclareMathOperator{\Pl}{PL} 
\DeclareMathOperator{\supp}{supp} 
\newcommand{\ini}{{\rm in}}
\newcommand{\arc}[1]{%
  \setbox9=\hbox{#1}%
  \ooalign{\resizebox{\wd9}{\height}{\texttoptiebar{\phantom{pq}}}\cr$#1$}}
\newcommand{\barc}[1]{%
  \setbox9=\hbox{#1}%
  \ooalign{\resizebox{\wd9}{\height}{\texttoptiebar{\phantom{p'q'}}}\cr$#1$}}
\let\witi\widetilde
\newcommand{\defcolor}[1]{\Blue{#1}}
\newcommand{\demph}[1]{\defcolor{{\sl #1}}}
\begin{document}     


\title{Higher convexity of coamoeba complements}  
 
\author{Mounir Nisse}
\address{Mounir Nisse\\School of Mathematics KIAS, 87 Hoegiro Dongdaemun-gu, Seoul
130-722, South Korea.}
\email{mounir.nisse@gmail.com}
\author{Frank Sottile}
\address{Frank Sottile\\
         Department of Mathematics\\
         Texas A\&M University\\
         College Station\\
         Texas\\
         USA}
\email{sottile@math.tamu.edu}
\urladdr{www.math.tamu.edu/\~{}sottile}
\thanks{Research of Sottile is supported in part by NSF grant DMS-1001615.}
\thanks{This material is based upon work supported by the National Science 
Foundation under Grant No. 0932078 000, while Sottile was in 
residence at the Mathematical Science Research Institute (MSRI) in 
Berkeley, California, during the winter semester of 2013.}
\subjclass[2010]{14T05, 32A60}
%
%
\keywords{} 

\begin{abstract}
 We show that the complement of the closure of the coamoeba of a variety of codimension $k{+}1$ is
 $k$-convex, in the sense of Gromov and Henriques.
 This generalizes a result of Nisse for hypersurface coamoebas.
 We use this to show that the complement of the nonarchimedean coamoeba of a variety of codimension
 $k{+}1$ is $k$-convex 
\end{abstract}

\maketitle

\section*{Introduction} 

The amoeba $\scrA$ of an algebraic subvariety of a torus $(\C^*)^n$ is its image in 
$\R^n$ under the logarithmic moment map ($z\mapsto\log|z|$ in each coordinate),
and its coamoeba $\coscrA$ is its image in $(S^1)^n$ under the coordinatewise argument
map. 
Some structures of a variety are reflected in its amoeba and coamoeba.
We study relations between topological properties of
a variety and of its amoeba and coamoeba.

A fundamental property of amoebas and coamoebas of hypersurfaces is that their complements have convex connected
components. 
Gelfand, Kapranov, and Zelevinsky showed this in their monograph introducing amoebas~\cite{GKZ}.
Nisse showed this for coamoebas~\cite[Th.~5.19]{Ni09}, where the complement is
taken in the universal cover $\R^n$ of $(S^1)^n$.

Gromov~\cite[\S$\tfrac{1}{2}$]{Gromov} introduced a generalized notion of convexity for domains 
in $\R^n$.
A subset $X\subset\R^n$ is \demph{$k$-convex} if for every $(k{+}1)$-dimensional
affine subspace $L$ of $\R^n$ the map of $k$th reduced homology groups 
$\witi{H}_k(L\cap X)\to\witi{H}_k(X)$ induced by the inclusion 
$L\cap X\hookrightarrow X$ is an injection.
Then $0$-convex is equivalent to the convexity of each connected component.
Gromov shows that if $X$ is a domain with a smooth boundary $W$, then $k$-convexity of $X$ is equivalent to  
the nonegativity of $n{-}k{-}1$ of the principal curvatures of $W$.
Like ordinary convexity, he shows that higher convexity behaves well under intersection:
If $X,Y\subset\R^n$ are $k$-convex, then $X\cap Y$ is also $k$-convex.

Thus Gelfand, et al.~\cite{GKZ} showed that the complement of a hypersurface amaoeba is
$0$-convex. 
Henriques~\cite{He04} conjectured that for an algebraic subvariety of the torus of
codimension $k{+}1$, the complement $\scrA^c$ of its amoeba is $k$-convex. 
He proved a weak version, that a nonnegative class in 
$\witi{H}_k(L\cap \scrA^c)$ has nonzero image in $\witi{H}_k(\scrA^c)$.
Mikhalkin~\cite{Mikhalkin} considered a local version, showing that $\scrA$ has no supporting $k$-cap, 
which is a ball $B$ in an affine $k{+}1$-dimensional plane such that $\scrA\cap B$ is nonempty and
compact, while $\scrA\cap(B+\epsilon v)$ is empty for some $v\in\R^n$ and all sufficiently small positive
$\epsilon$.

Favorov~\cite{Favorov} generalized amoebas to holomorphic almost periodic functions, 
and Henriques's result was extended to this setting in~\cite{FGS,Silipo}.
Rashkovskii~\cite{Rashkovskii} established a related result, the tube domain $\R^n+i\scrA^c$ of the
complement of an amoeba of a variety of codimension $k$ is $(n{-}k{-}1)$-pseudoconvex, in the sense 
of Rothstein~\cite{Rothstein}, which implies Mikhalkin's result on the absence of $k$-caps.
This work just mentioned, including that of Gromov, has yet to be assimilated by the community
working on amoebas and coamoebas.

Bushueva and Tsikh~\cite{BT12} proved Henriques's conjecture when the variety is a
complete intersection, but the general case remains open.
We first prove a version of Henriques's conjecture for coamoebas.
This involves the closure of the inverse image of the coamoeba in the universal cover $\R^n$ of
$(S^1)^n$, called the lifted coamoeba.
We show that if a subvariety of a torus has codimension $k{+}1$, then the complement of its
lifted coamoeba is $k$-convex.
This uses the phase limit set of the variety $V$~\cite{NS} to reduce the statement to the
$k$-convexity of the complement of an arrangement of affine subspaces of $\R^n$.

Let $\K$ be a complete valued field with residue field $\C$.
The nonarchimedean coamoeba of $V\subset(\K^\times)^n$ is its image under an argument
map~\cite{NSna}.
We show that if $V$ has codimension $k{+}1$, then the complement of the lift of its nonarchimedean
coamoeba is $k$-convex.  

In Section~\ref{S:one} we recall the structure of coamoebas and some work of Henriques needed for
higher convexity.
We use this in Section~\ref{S:two} to prove that the complement of the lifted coamoeba of a variety of
codimension $k{+}1$ is $k$-convex.
We discuss nonarchimedean coamoebas in Section~\ref{S:three} and prove that the complement of the lifted
nonarchimedean coamoeba of a variety $V\subset(\K^\times)^n$ of codimension $k{+}1$ is $k$-convex.

\section{Preliminaries}\label{S:one}
Let \defcolor{$\C^\times$} be the nonzero complex numbers and \defcolor{$(\C^\times)^n$} the
$n$-dimensional complex torus.
Subtori are connected algebraic subgroups of $(\C^\times)^n$.
They have the following classification. 
Let $N\subset\R^n$ be a rational linear subspace ($N$ is spanned by integer vectors).
Then the image of $\C\otimes_\R N$ in $(\C^\times)^n$ under the exponential map is a subtorus
\defcolor{$\C^\times_N$}, and all subtori occur in this way.
Call a coset \defcolor{$a\C^\times_N$} of a subtorus an \demph{affine subgroup} of
$(\C^\times)^n$.

Let $\defcolor{\T}:=\R/\Z\simeq S^1$ be the circle group of complex numbers of norm 1.
Connected subgroups of $\T^n$ have the same classification as subtori of $(\C^\times)^n$,
having the form $\T_N:=N/(N\cap\Z^n)$ for $N$ a rational linear subspace of $\R^n$.
Call  coset $a\T_N$ of a connected
subgroup $\T_N$ (with $a\in\T^n$) an \demph{affine subgroup} of $\T^n$.

We identify $\Z^n$ with the character group of $(\C^\times)^n$, where $\alpha\in\Z^n$
corresponds to the character 
$x\mapsto \defcolor{x^\alpha}:=x_1^{\alpha_1}\dotsb x_n^{\alpha_n}$, which is a 
\demph{Laurent monomial}. 
A finite linear combination 
\[
    f\ =\ \sum a_\alpha x^\alpha
    \qquad a_\alpha\in\C^\times\,,
\]
of Laurent monomials is a \demph{Laurent polynomial}.
The coordinate ring of $(\C^\times)^n$ is the ring of Laurent polynomials, 
$\C[x_1,x_1^{-1},\dotsc,x_n,x_n^{-1}]$, which we will write as \defcolor{$\C[x^{\pm}]$}.

An algebraic subvariety $V\subset(\C^\times)^n$ is the set of common zeroes of a collection
of Laurent polynomials.
We will not assume that varieties are irreducible, but rather that each component has the
same dimension. 
The collection of all Laurent polynomials that vanish on $V$ is an ideal $I=I(V)$ of
$\C[x^{\pm}]$.
As amoebas and coamoebas are set-theoretic objects, we are not
concerned with the scheme structure and will work with varieties.

\subsection{Structure of coamoebas}\label{SS:coamoeba}

Given a vector $w\in\R^n$ and a Laurent polynomial $f$, the \demph{initial form
$\ini_w f$} of $f$ is the sum of terms $a_\alpha x^\alpha$ of $f$ for which 
$w\cdot \alpha := w_1\alpha_1+\dotsb+w_n\alpha_n$ is maximal among all terms of $f$.
Given an ideal $I$ of $\C[x^{\pm}]$ its \demph{initial ideal $\ini_w I$} is the collection
of all initial forms $\ini_w f$ for $f\in I$.
The \demph{initial variety $\ini_w V$} is the variety defined by the initial ideal 
$\ini_w I$, where $I$ is the ideal of $V$.

We have $\R\times\T\xrightarrow{\,\sim\,}\C^\times$ via
$(r,\theta)\mapsto e^{r+2\pi\theta\sqrt{-1}}$.
The inverse map is given by $z\mapsto (\log|z|,\arg(z))$.
This extends to a map $(\C^\times)^n\xrightarrow{\,\sim\,}\R^n\times\T^n$.
Write $\Log\colon(\C^\times)^n\to\R^n$ for the projection to the first factor and 
$\Arg\colon(\C^\times)^n\to\T^n$ for the projection to the second factor.
The \demph{amoeba} $\defcolor{\scrA(V)}\subset\R^n$ of an algebraic
subvariety $V$ is its image under $\Log$, and the \demph{coamoeba}
$\defcolor{\coscrA(V)}\subset\T^n$ of $V$ is its image under $\Arg$.  
The coamoeba of the subgroup $\C^\times_N$ is $\T_N$, and affine subgroups of $\T^n$ are
coamoebas of affine subgroups of $(\C^\times)^n$.

The structure of the boundary of the coamoeba is important for our arguments.
The \demph{phase limit set $\scrP^{\infty}(V)$} of an algebraic subvariety
$V\subset(\C^\times)^n$ is the set of accumulation points of arguments of unbounded sequences
in $V$.
We state the main result of~\cite{NS}.

\begin{proposition}
 The closure of the coamoeba $\coscrA(V)$ of an algebraic subvariety $V$ of $(\C^\times)^n$ is
 $\coscrA(V)\cup\scrP^{\infty}(V)$, and  
\[
   \scrP^{\infty}(V)\ =\ 
    \bigcup_{w\in\R^n\smallsetminus\{0\}} \coscrA( \ini_w V)\,.
\]
\end{proposition}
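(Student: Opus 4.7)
The plan is to break the proposition into two assertions: the set-theoretic decomposition $\overline{\coscrA(V)}=\coscrA(V)\cup\scrP^{\infty}(V)$, and the identification of $\scrP^{\infty}(V)$ with a union of coamoebas of initial varieties. The first assertion is almost a tautology once one unwinds definitions. Given $\theta\in\overline{\coscrA(V)}\setminus\coscrA(V)$, pick $v_n\in V$ with $\Arg(v_n)\to\theta$. If $(v_n)$ were bounded in $(\C^\times)^n$ (i.e.\ both $v_n$ and $1/v_n$ have bounded modulus coordinatewise), then by passing to a subsequence $v_n$ converges to some $v\in V$, forcing $\theta=\Arg(v)\in\coscrA(V)$, a contradiction. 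Hence $(v_n)$ is unbounded and $\theta\in\scrP^{\infty}(V)$; the reverse inclusions are immediate from the definitions.

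The content of the proposition is the second assertion. For the inclusion $\supseteq$, fix $w\in\R^n\smallsetminus\{0\}$ and $p\in\ini_w V$, and exhibit an unbounded sequence in $V$ whose arguments converge to $\Arg(p)$. The key tool is the Gr\"obner-degeneration interpretation of $\ini_w V$: scaling $V$ by $t^{-w}:=(t^{-w_1},\dotsc,t^{-w_n})$ for $t>0$ yields a family whose flat limit as $t\to\infty$ is (set-theoretically) $\ini_w V$, so $p=\lim_{t\to\infty}t^{-w}v_t$ for some $v_t\in V$. Since $t^{-w}$ is a positive real vector, $\Arg(t^{-w}v_t)=\Arg(v_t)$, hence $\Arg(v_t)\to\Arg(p)$; and $\Log(v_t)=(\log t)w+\Log(t^{-w}v_t)$ diverges in direction $w$, so $(v_t)$ is unbounded. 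For $\subseteq$, take an unbounded sequence $v_n\in V$ with $\Arg(v_n)\to\theta$. After passing to a subsequence, $\Log(v_n)/\|\Log(v_n)\|$ converges to a unit vector; pick $w$ representing this direction of divergence and choose $t_n\to\infty$ so that $t_n^{-w}v_n$ is bounded. A further subsequence yields $t_n^{-w}v_n\to p$ with $\Arg(p)=\theta$, and the claim is $p\in\ini_w V$: for any $f\in I(V)$, write $f(t^w z)$ and extract leading-order behavior as $t_n\to\infty$ to see that $\ini_w f$ vanishes at $p$.

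The main obstacle is the rigorous justification of the degeneration arguments, which is required in both directions. This means either invoking the full Gr\"obner-basis picture (realizing $V$ and $\ini_w V$ as the general and special fibers of a flat family over $\C$), or parametrizing the branches of $V$ that go to infinity in direction $w$ by Puiseux series, or using a tropical/nonstandard framework. In either case an auxiliary subtlety is that $w$ need not be rational, so one must work with initial ideals determined by a relatively open cone of the Gr\"obner fan, and ensure that limits of arguments are insensitive to generic perturbations of $w$ within such a cone. Once these degeneration statements are available, the proof assembles cleanly from the two inclusions above.
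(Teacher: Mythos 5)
The paper does not prove this proposition---it is cited verbatim as the main result of~\cite{NS}---so there is no in-paper proof to compare against. Evaluating your sketch on its own merits: the first assertion ($\overline{\coscrA(V)}=\coscrA(V)\cup\scrP^{\infty}(V)$) and the containment $\supseteq$ of the second assertion are sound in outline. For $\supseteq$ you correctly exploit that $t^{-w}$ is a positive real vector so scaling preserves arguments, and that the one-parameter Gr\"obner degeneration of $V$ along $w$ limits to $\ini_w V$; the remaining work of making the flat-limit claim rigorous is a standard (if nontrivial) fact.

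The genuine gap is in the containment $\subseteq$. From $\Log(v_n)/\|\Log(v_n)\|\to w$ you cannot, in general, choose scalars $t_n\to\infty$ so that $t_n^{-w}v_n$ is bounded. Writing $\Log(v_n)=\|\Log(v_n)\|\,u_n$ with $u_n\to w$, one has $\Log(t_n^{-w}v_n)=\Log(v_n)-(\log t_n)w$, and even with the optimal choice $\log t_n=\|\Log(v_n)\|$ the residual $\|\Log(v_n)\|\,(u_n-w)$ can diverge; concretely, $\Log(v_n)=(n,\sqrt{n})$ has $u_n\to(1,0)$ but no rescaling along $(1,0)$ makes the second coordinate bounded. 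So the single extraction of a ``direction of divergence'' does not yield a convergent normalized subsequence, and the step ``$t_n^{-w}v_n\to p$'' can fail. The correct argument requires more structure: either iterate the degeneration---after normalizing along $w$ the residual sequence may diverge in a secondary direction $w'$, and one uses the fact that $\ini_{w'}(\ini_w V)=\ini_{w+\epsilon w'}V$ (the Gr\"obner fan / tropical fan compatibility the paper recalls in \S\ref{SS:coamoeba}) so the iteration stays inside the claimed union and terminates---or, as in~\cite{NS}, pass to a tropical compactification $X_\Sigma$ of $(\C^\times)^n$ in which $\overline V$ is compact and the boundary strata $\overline V\cap O_\sigma$ are the torus quotients of the $\ini_\sigma V$, so that convergence of a subsequence of $v_n$ in $\overline V$ directly identifies the limiting argument as a point of some $\coscrA(\ini_\sigma V)$. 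You gesture at a ``tropical/nonstandard framework'' as an alternative route, but the concrete normalization step as written would fail, and the iteration or compactification idea that repairs it is the real content missing from the sketch.
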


To discuss the structure of $\scrP^\infty(V)$, we first explain the relation between the
tropical variety of $V$ and its initial varieties. 
The tropical variety~\cite{Berg,BiGr,SS} $\scrT(V)$ of $V$ is the set of those $w\in\R^n$
such that $\ini_w V\neq\emptyset$.
This set can be given (noncanonically) the structure of a rational polyhedral fan
$\Sigma$, called a \demph{tropical fan}, with the following properties.
If $w,v$ lie in the relative interior of a cone $\sigma$ of $\Sigma$, then 
$\ini_w V=\ini_v V$.
Write \defcolor{$\ini_\sigma V$} for this common initial variety.
Furthermore, if $\sigma\subset\tau$, then $\ini_\tau V$ is an initial variety of
$\ini_\sigma V$, and all initial varieties of $\ini_\sigma V$ occur in this way.
For $\sigma\in\Sigma$, the initial variety $\ini_\sigma V$ is equivariant for the torus
\defcolor{$\C^\times_{\langle\sigma\rangle}$}, where 
\defcolor{$\langle\sigma\rangle$} is the linear span of $\sigma$ in $\R^n$, and every
initial variety of $\ini_\sigma V$ is also $\C^\times_{\langle\sigma\rangle}$-equivariant.
The quotient $W$ of $\ini_\sigma V$ by the torus  $\C^\times_{\langle\sigma\rangle}$ is a
subvariety of $(\C^\times)^n/\C^\times_{\langle\sigma\rangle}$, and the initial varieties of $W$ are
exactly the quotients of the initial varieties of $\ini_\sigma V$ by the torus
$\C^\times_{\langle\sigma\rangle}$. 

This relation between cones of $\Sigma$ and initial varieties has a partial converse.
If $\ini_\sigma V$ is equivariant for a subgroup $\C^\times_N$ that properly contains
$\C^\times_{\langle\sigma\rangle}$, then there is a cone $\tau$ of $\Sigma$ with
$\sigma\subsetneq\tau$ with $\ini_\tau V=\ini_\sigma V$.
Lastly, if $V$ has pure dimension $d$, then the fan $\Sigma$ is a pure polyhedral complex
of dimension $d$.
It follows that if $\sigma$ is a maximal cone of $\Sigma$, then $\ini_\sigma V$ is
supported on a finite union of orbits (and hence cosets) of
$\C^\times_{\langle\sigma\rangle}$.

This has the following consequence for the closure of the coamoeba $\overline{\coscrA(V)}$
and the phase limit set $\scrP^\infty(V)$ when $V$ has pure dimension $d$, as shown in~\cite{NS}.
Let $\Sigma$ be a tropical fan for $V$.
Then $\scrP^\infty(V)$ is a finite union
\[
   \scrP^\infty(V)\ =\ 
    \bigcup_{\sigma\in\Sigma\smallsetminus\{0\}} \coscrA(\ini_\sigma V)\,,
\]
where each initial coamoeba $\coscrA(\ini_\sigma V)$ is equivariant for the
subtorus $\T_{\langle\sigma\rangle}$, and if $\sigma$ is a maximal cone of $\Sigma$, then
$\coscrA(\ini_\sigma V)$ is a finite union of cosets of $\T_{\langle\sigma\rangle}$.
The \demph{shell} of $V$ is the union of such cosets
\[
    \defcolor{{\it sh}(V)}\ :=\ \bigcup_{\sigma\in\Sigma\mbox{{\scriptsize\rm \ is maximal}}} \coscrA(\ini_\sigma V)\,.
\]  
It is an arrangement of finitely many affine subgroups of dimension $d=\dim V$.

The closure of the pullback of a coamoeba $\coscrA(V)\subset\T^n$ to the universal cover
$\R^n$ of $\T^n$ is the \demph{lifted coamoeba \defcolor{$\lcoscrA(V)$}} of $V$.
The pullback of the shell is the lifted shell \defcolor{$\lsh(V)$} of $V$, which is a
$\Z^n$-periodic arrangement of translates of rational linear subspaces of $\R^n$, each of
dimension $d=\dim V$.
While the lifted shell consists of infinitely many linear spaces, at most finitely many
will meet any compact subset, as only finitely many meet any fundamental domain of
$\Z^n$ in $\R^n$.

\begin{remark}\label{R:covers}
 For $m$ a positive integer, consider the map 
 $\varphi_m\colon(\C^\times)^n\to(\C^\times)^n$ 
 given by $(z_1,\dotsc,z_n)\mapsto(z_1^m,\dotsc,z_n^m)$, which is a $m^n$-fold cover.
 This induces a $m^n$-fold cover $\T^n\to\T^n$.
 The pullback of $V$ along $\varphi_m$ is a subvariety  of
 $(\C^\times)^n$ whose coamoeba is the pullback of the coamoeba of $V$ along $\varphi_m$.
\end{remark}

We illustrate these ideas for lines in $(\C^\times)^3$.
On the left below is the coamoeba of a line defined over $\R$ and on the right is one that 
is not.
Both are displayed in a fundamental domain for $\T^3$ in $\R^3$.
\[
  \includegraphics[height=100pt]{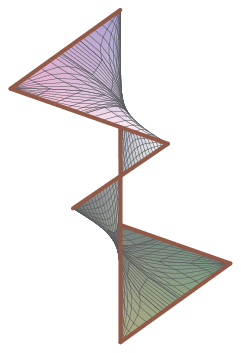}
   \qquad
  \includegraphics[height=100pt]{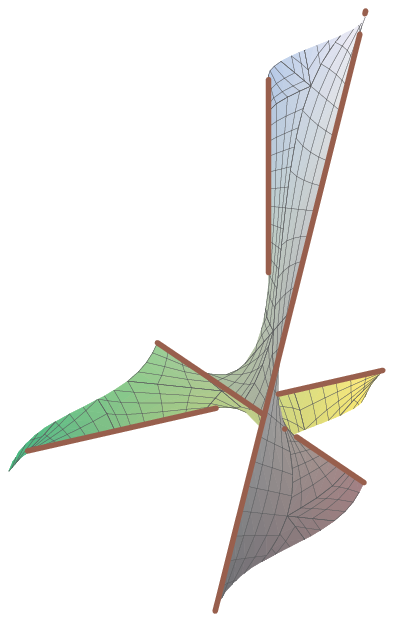}
\]
The lines bounding these coamoebas are their phase limit sets, and also their shells.

Figure~\ref{F:one} shows part of their lifted coamoebas, displaying them in eight
($2^3$) fundamental domains.
\begin{figure}[htb]
  \includegraphics[height=190pt]{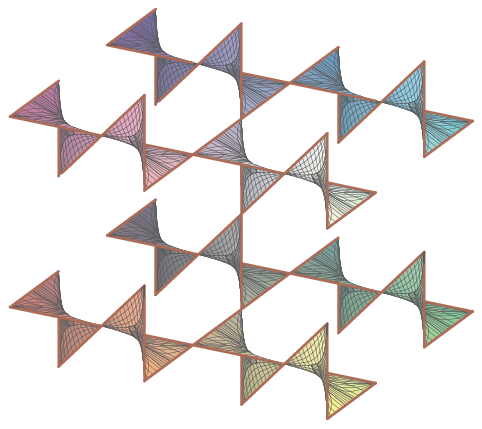}
   \qquad
  \includegraphics[height=190pt]{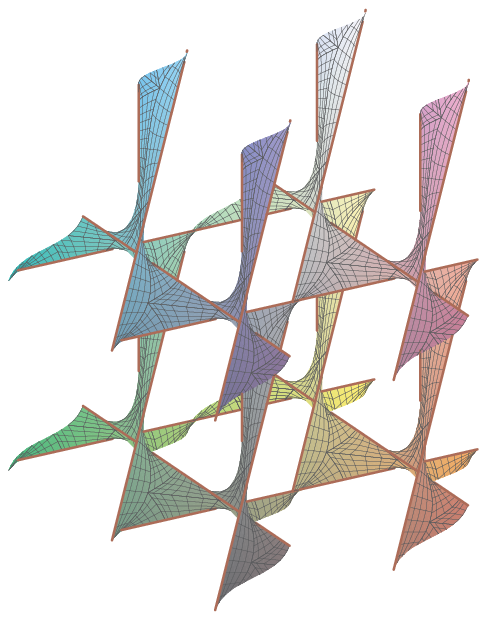}
\caption{Lifted coamoebas.}\label{F:one}
\end{figure}
The periodic arrangements of lines are their lifted shells.
These are also the coamoebas of the pullbacks under $\varphi_2^{-1}$ in a single
fundamental domain.  

\subsection{Higher convexity}
A subset  $X$ of $\mathbb{R}^n$ is \demph{convex} if for any affine  line $L$ in
$\mathbb{R}^n$, the intersection $L\cap X$ has at most one connected component. 
In other words, the intersection  $L\cap X$ contains all intervals with boundary in
$L\cap X$.
If the  points of $X$ are viewed as  $0$-cycles, then the convexity of $X$  means that  if
$a$ and $b$  are $0$-cycles that are homologous in $X$, then they are also homologous
in $L\cap X$ where $L$ is the line containing  $a$ and $b$.  

Write \defcolor{$\witi{H}_{*}(X)$} for the reduced homology of a
space $X$ with integral coefficients.
This is the kernel of the map $\deg\colon H_*(X)\to H_*({\rm pt})$ induced by the
map $X\to{\rm pt}$ to a point.
Gromov~\cite[\S$\tfrac{1}{2}$]{Gromov} gave the following generalization of convexity.
A subset  $X$ of a vector space $V$ is \demph{$k$-convex} if for any affine $(k{+}1)$-plane
$L$, the map $\witi{H}_k(L\cap X)\rightarrow \witi{H}_k (X)$
induced by the inclusion is injective.  
Connected and $0$-convex is the usual notion of convexity.
For us, $X$ will always be a triangulable open subset of $\R^n$.

Henriques later considered this same notion, applying it to complements
of amoebas~\cite{He04}.
We need some technical results of Henriques, which may be
found in Section~2 of~\cite{He04}.
We will always take homology with integer coefficients.
Let $X$ be a triangulated manifold, such as an open subset of an affine space.
For a nonnegative integer $k$, let $\Delta_k$ be the standard $k$-simplex.
Let \defcolor{$\Pl_k(X)$} be the subgroup of the group of singular $k$-chains in $X$
generated by piecewise linear maps $\lambda\colon\Delta_k\to X$, called simplices.
Two chains $\lambda$ and $\mu$ are \demph{geometrically equivalent} if they cannot be
distinguished by $k$-forms.
We define \defcolor{$C^\Delta_\bullet(X)$} to be the quotient of $\Pl_\bullet(X)$ by
geometric equivalence and write $[\lambda]\in C^\Delta_\bullet(X)$ for the class of 
a chain $\lambda\in\Pl_\bullet(X)$.
These \demph{polyhedral chains} were introduced by Whitney~\cite{Whitney}.
They form a subcomplex of the complex of singular chains on $X$, and the first observation
of Henriques~\cite[Prop.~2.3]{He04} is that the homology of this complex is identified
with the usual singular homology of $X$, and the same for reduced homology.
We will always assume that homology classes are represented by polyhedral
chains.

A chain $\lambda\in\Pl_k(X)$ is a sum,  $\lambda=\sum_i a_i\lambda_i$, where for each $i$,
$a_i\in\Z$ and $\lambda_i\colon\Delta_k\to X$ is a piecewise linear map.
The \demph{support, $\supp(\lambda)$} of such a chain is the union of
the images of the maps $\lambda_i$.
If $c=[\lambda]\in C^\Delta_k(X)$ is a polyhedral chain, its support
is 
\[
   \supp(c)\ :=\ \bigcap_{c=[\lambda]} \supp(\lambda)\,.
\]
Henriques~\cite[Lemma~2.4]{He04} shows that if $L$ is an affine space of dimension $k$,
then any polyhedral $k$-chain $c\in C^\Delta(L)$ has a representative
$\lambda\in\Pl_k(L)$ with $\supp(c)=\supp(\lambda)$.
This has the corollary that if $X\subset L$ is triangulated, then 
$C^\Delta_k(X)$ can be identified with the chains on $L$ whose support is contained in
$X$. 

We need a statement that Henriques establishes in the proof of his Lemma~2.4.

\begin{lemma}\label{L:pm_decomp}
 Let $L$ be an oriented affine linear space of dimension $k$ and 
 $c\in C^\Delta_k(L)$ be a polyhedral chain.
 Then $c$ has a representative $\lambda\in\Pl_k(L)$ with $\supp(c)=\supp(\lambda)$,
\[
   \lambda\ =\ \sum_{i} a_i\lambda_i\,,
\]
 where, for each $i$, $\lambda_i\colon\Delta_k\to L$ is an orientation-preserving affine
 map, and if $i\neq j$ the intersection $\supp(\lambda_i)\cap\supp(\lambda_j)$ is
 a common proper face of the images of $\lambda_i$ and $\lambda_j$.
\end{lemma}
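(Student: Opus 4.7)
The plan is to start from an arbitrary representative $\lambda'=\sum_i a_i'\lambda_i'\in\Pl_k(L)$ of $c$ and refine it until the images of its simplices meet only in common faces, then amalgamate coefficients. First I would consider the finite arrangement $\mathcal{H}$ of affine hyperplanes in $L$ obtained by taking the affine spans of all codimension-one faces of the images $\lambda_i'(\Delta_k)$. This arrangement induces a polyhedral cell decomposition of $\bigcup_i \lambda_i'(\Delta_k)$ whose top cells are $k$-dimensional convex polytopes, each contained in a definite subset of the $\lambda_i'(\Delta_k)$'s and, crucially, not further subdivided by them. Choosing a triangulation of this polyhedral decomposition that introduces no new top-dimensional cells produces a simplicial complex $\mathcal{K}$ in $L$ whose distinct top simplices meet only in common faces.

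Next, for each top $k$-simplex $\sigma$ of $\mathcal{K}$ I would fix an orientation-preserving affine parameterization $\sigma_{\mathrm{aff}}\colon\Delta_k\to L$ and set
\[
  b_\sigma \;=\; \sum_{i\,:\,\sigma\subseteq\lambda_i'(\Delta_k)} \epsilon_i\, a_i'\,,
\]
where $\epsilon_i\in\{\pm 1\}$ records whether $\lambda_i'$ restricts over $\sigma$ to an orientation-preserving or reversing affine map, relative to the fixed orientation of $L$. Let $\lambda=\sum_\sigma b_\sigma\,\sigma_{\mathrm{aff}}$ with the sum taken over those $\sigma$ for which $b_\sigma\neq 0$. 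By additivity of integration over the subdivision, $\int_{\lambda}\omega=\int_{\lambda'}\omega$ for every smooth $k$-form $\omega$ on $L$, so $\lambda$ and $\lambda'$ are geometrically equivalent and $[\lambda]=c$. By construction each $\sigma_{\mathrm{aff}}$ is an orientation-preserving affine map and the images of distinct $\sigma_{\mathrm{aff}}$ meet only in common proper faces, as required.

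The remaining and main task is the support equality $\supp(\lambda)=\supp(c)$. The inclusion $\supp(c)\subseteq\supp(\lambda)$ is immediate from the definition of $\supp(c)$ as an intersection of supports of representatives. For the reverse, I would argue that every top simplex $\sigma$ with $b_\sigma\neq 0$ is contained in $\supp(\mu)$ for \emph{every} representative $\mu$ of $c$. If not, there is a point $p$ in the relative interior of $\sigma$ and an open neighborhood $U\subseteq\mathrm{int}(\sigma)$ of $p$ disjoint from $\supp(\mu)$. Choosing a smooth compactly-supported $k$-form $\omega$ on $L$ with $\supp(\omega)\subset U$ and $\int_U\omega\neq 0$ yields $\int_\mu\omega=0$ while $\int_\lambda\omega=b_\sigma\int_U\omega\neq 0$, contradicting the geometric equivalence of $\lambda$ and $\mu$. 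Hence $\sigma\subseteq\supp(\mu)$ for every representative, and so $\supp(\lambda)\subseteq\supp(c)$. The delicate point here is isolating a single top simplex's contribution by a test form while $\supp(\mu)$ and the other top simplices may approach $\sigma$; the polyhedral finiteness of $\mathcal{K}$ together with the openness of $\mathrm{int}(\sigma)$ in $L$ is exactly what makes this separation possible, and this is where the Whitney-style polyhedral formalism does the essential work.
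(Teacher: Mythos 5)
Your approach---refine to a common simplicial decomposition of $L$, assign each top simplex the signed sum of the coefficients of the pieces covering it, discard the simplices with zero net coefficient, and then use bump $k$-forms to pin down the support---is the standard way to prove this fact, and it is essentially the argument Henriques gives in establishing his Lemma~2.4 (the present paper cites that result rather than reproving it). The bump-form step showing $\supp(\lambda)\subseteq\supp(\mu)$ for every representative $\mu$, using that the relative interior of a top simplex is open in $L$ and disjoint from all other top simplices, is correct and is indeed the crux of the support equality.

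There is one gap you should close before the argument is airtight. The generators of $\Pl_k(L)$ are \emph{piecewise}-linear maps $\lambda_i'\colon\Delta_k\to L$, not affine ones. The image $\lambda_i'(\Delta_k)$ therefore need not be a simplex and has no canonical codimension-one faces, so the arrangement $\mathcal{H}$ as you describe it is not well defined; moreover a PL map into a $k$-dimensional target can fold, so the contribution of $\lambda_i'$ over a cell $\sigma$ is a local degree in $\Z$, not a sign $\epsilon_i\in\{\pm1\}$. The fix is routine: first subdivide the domain $\Delta_k$ of each $\lambda_i'$ into simplices on which $\lambda_i'$ is affine, discard the degenerate pieces, and replace $\lambda'$ by the resulting geometrically equivalent chain of affine simplices. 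Then $\mathcal{H}$ is generated by the affine spans of the codimension-one faces of these affine images, your formula for $b_\sigma$ with $\epsilon_i\in\{\pm1\}$ is justified, and the rest of the argument proceeds as written. (A small phrasing point: any triangulation of a non-simplicial top cell of the arrangement adds top-dimensional simplices; what you actually need is a triangulation that refines the cell decomposition without introducing new vertices, so that every top simplex lies in the closure of a single cell.)
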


Let $\lambda$ be the cycle of this lemma.
If we define 
$\defcolor{\lambda_{\pm}}:=\sum_i \max\{0,\pm a_i\}\lambda_i$, then 
$\lambda=\lambda_+ - \lambda_-$ and the supports of $\lambda_+$ and $\lambda_-$ intersect in
a set of dimension at most $k{-}1$.
By definition, each of $\lambda_\pm$ is a positive sum of positively oriented simplices.
We deduce a useful consequence of this decomposition.

\begin{corollary}\label{C:disjoint_supports}
 Let $c$ and $\lambda=\lambda_+-\lambda_-$ be as in Lemma~$\ref{L:pm_decomp}$.
 Then 
 \begin{equation}\label{Eq:disjoint_supports}
   \supp(\partial\lambda)\ =\ 
   \supp(\partial\lambda_+) \bigcup \supp(\partial\lambda_-)\,.
 \end{equation}
\end{corollary}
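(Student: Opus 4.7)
The plan is to verify both inclusions in~\eqref{Eq:disjoint_supports}. The inclusion $\supp(\partial\lambda)\subseteq\supp(\partial\lambda_+)\cup\supp(\partial\lambda_-)$ is immediate from $\partial\lambda=\partial\lambda_+-\partial\lambda_-$, since the support of a difference of polyhedral chains lies in the union of the supports of the summands. So the real substance of the corollary is the reverse inclusion: no $(k{-}1)$-face cancels between $\partial\lambda_+$ and $\partial\lambda_-$ when they are subtracted.

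The first step is to use Lemma~\ref{L:pm_decomp} to pin down how the simplices $\lambda_i$ can meet a given $(k{-}1)$-face $F\subset L$. Since $L$ is $k$-dimensional and distinct $\lambda_i$ intersect only in common proper faces, $F$ belongs to at most two of the $\lambda_i$: one on each side of $F$ in $L$, because two such simplices on the same side would have overlapping interiors, contradicting the lemma. When $F$ is a common face of $\lambda_i$ and $\lambda_j$, each orientation-preserving in the oriented space $L$, the orientations induced on $F$ by $\partial\lambda_i$ and $\partial\lambda_j$ are opposite. This is the standard fact that a $(k{-}1)$-face of a $k$-manifold receives opposite induced orientations from the two adjacent top-dimensional cells.

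With this set up, write $b_i=\max\{0,a_i\}$ and $c_i=\max\{0,-a_i\}$, so that $a_i=b_i-c_i$ and $b_ic_i=0$, and fix the orientation $F^i$ of $F$ induced from $\lambda_i$. If $F$ is shared by $\lambda_i$ and $\lambda_j$, the coefficients of $F^i$ in $\partial\lambda_+$, $\partial\lambda_-$, and $\partial\lambda$ are respectively $b_i-b_j$, $c_i-c_j$, and $a_i-a_j=(b_i-b_j)-(c_i-c_j)$; if $F$ lies in only $\lambda_i$, the same formulas hold under the convention $b_j=c_j=0$. The key arithmetic observation is that $(b_i-b_j)(c_i-c_j)\leq 0$ for all integers $a_i,a_j$, which drops out of a short case split on their signs using $b_ic_i=b_jc_j=0$. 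So $b_i-b_j$ and $c_i-c_j$ never share a nonzero sign, and hence their difference $a_i-a_j$ vanishes precisely when both differences vanish. This forces the reverse inclusion. The only genuine delicacy in the argument is the orientation bookkeeping in the first step; once the opposite-orientation claim is available, the inequality $(b_i-b_j)(c_i-c_j)\leq 0$ closes the proof.
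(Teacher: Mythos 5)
Your argument is correct and essentially the same as the paper's: both rest on the two consequences of Lemma~\ref{L:pm_decomp} that (i) the $\lambda_i$ meet only in common proper faces (so at most two share a given $(k{-}1)$-face $F$, one on each side) and (ii) each $\lambda_i$ is orientation-preserving (so the two simplices sharing $F$ induce opposite orientations on it). The paper argues by contradiction --- assuming a face $\mu$ cancels between $\partial\lambda_+$ and $\partial\lambda_-$, extracting a $\lambda_i$ from $\lambda_+$ and a $\lambda_j$ from $\lambda_-$ with $\mu$ on the same side, and contradicting (i). Your version replaces the contradiction with explicit coefficient bookkeeping via $b_i=\max\{0,a_i\}$, $c_i=\max\{0,-a_i\}$ and the inequality $(b_i-b_j)(c_i-c_j)\leq 0$, which shows directly that the coefficient of $F$ in $\partial\lambda$ vanishes if and only if the coefficients in $\partial\lambda_+$ and $\partial\lambda_-$ both vanish. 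This is a cleaner, somewhat more careful write-up of the same idea --- in particular, it uniformly handles the degenerate case where $F$ lies on only one $\lambda_i$, which the paper's phrasing glosses over --- but it is not a different route.
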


\begin{proof}
 If~\eqref{Eq:disjoint_supports} does not hold, then there is a $k{-}1$ simplex
 $\mu\colon\Delta_{k-1}\to L$ occurring
 in both $\partial\lambda_+$ and $\partial\lambda_-$, but not in 
 $\partial\lambda=\partial\lambda_+-\partial\lambda_-$. 
 Necessarily, the coefficients of $\mu$ in $\partial\lambda_+$ and in $\partial\lambda_-$
 are equal and nonzero, for they cancel in 
 $\partial\lambda=\partial\lambda_+ -\partial\lambda_-$.

 Thus there are $k$-simplices $\lambda_i$ in $\lambda_+$ and $\lambda_j$ in $\lambda_-$ for
 which $\mu$ appears in their boundary with the same sign.
 However, as $\lambda_i$ and $\lambda_j$ both preserve orientation, this implies that they
 lie on the same side of the image of $\mu$, contradicting the assertion of
 Lemma~\ref{L:pm_decomp} that their supports intersect in common proper face.
\end{proof}

Let $\witi{C}^\Delta_\bullet(X)$ be the complex of reduced chains (the kernel of the map
$C^\Delta(X)\to \Z$ induced by the degree of a 0-chain).
Let $\witi{Z}_k(X)$ be the group of reduced $k$-cycles of $X$, which is the kernel
of the map $\partial_k\colon \witi{C}^\Delta_k(X)\to\witi{C}^\Delta_{k-1}(X)$.
We state Lemma 2.7 of~\cite{He04}.

\begin{lemma}\label{L:2.7}
  Let $k\geq 0$ and let  $X\subset L$ be an open subset of a $k$-dimensional real vector
  space $L$.
  Let $c\in\witi{Z}_{k}(X)$.
  Then there is a unique polyhedral chain $C\in C^\Delta_{k+1}(L)$ with $\partial C=c$.
  Moreover $[c]$ is nonzero in $\witi{H}_{k}(X)$ if and only if there is a point
  $p\in L\smallsetminus X$ in the support of $C$.
  In particular, every representative $\lambda\in\Pl_{k+1}(L)$ of $C$ meets 
  $L\smallsetminus X$.
\end{lemma}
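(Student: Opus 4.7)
The plan is to construct $C$ by passing from $X$ to the ambient vector space $L$, where contractibility gives a filling immediately, and then to use top-dimensional rigidity to pin $C$ down uniquely; the dichotomy for $[c]\in\witi{H}_k(X)$ then falls out by comparing this canonical filling to any hypothetical filling inside $X$. (Reading the statement with the conventions of Section~\ref{S:one}, $L$ must be $(k{+}1)$-dimensional for dimensions to match; I work under this reading.)

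For existence, $L$ is a real vector space and hence contractible, so $\witi{H}_k(L)=0$. Viewing $c$ as a reduced cycle in $L$ via the inclusion $X\hookrightarrow L$, it must bound, and by Henriques's identification of polyhedral homology with singular homology (\cite[Prop.~2.3]{He04}) one may take the bounding chain to be polyhedral, producing $C\in C^\Delta_{k+1}(L)$ with $\partial C=c$. For uniqueness, suppose $C'$ is another such filling. Then $C-C'$ is a polyhedral $(k{+}1)$-cycle in the $(k{+}1)$-dimensional space $L$. Applying Lemma~\ref{L:pm_decomp} in top dimension realizes such a cycle as a compactly supported, locally constant, integer-valued function on the complement of a codimension-one polyhedral subset of $L$. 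Corollary~\ref{C:disjoint_supports} says that $\partial(C-C')=0$ leaves no ``wall'' across which this function jumps, so the function is globally constant on the connected space $L$; compact support then forces the constant to be $0$, giving $C=C'$.

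For the dichotomy, suppose first that $[c]=0$ in $\witi{H}_k(X)$, so there is some $D\in\witi{C}^\Delta_{k+1}(X)$ with $\partial D=c$. Pushing $D$ forward to $L$ and invoking uniqueness gives $C=D$, hence $\supp(C)\subseteq X$, and in particular no $p\in L\smallsetminus X$ lies in $\supp(C)$. Conversely, if $\supp(C)\subseteq X$, then by the corollary to Henriques's Lemma~2.4 identifying $C^\Delta_{k+1}(X)$ with polyhedral chains on $L$ supported in $X$, the chain $C$ itself represents a class in $\witi{C}^\Delta_{k+1}(X)$, and so $[c]=\partial[C]=0$. The final assertion that every representative $\lambda\in\Pl_{k+1}(L)$ of $C$ meets $L\smallsetminus X$ is then immediate, because Lemma~\ref{L:pm_decomp} provides a representative $\lambda$ with $\supp(\lambda)=\supp(C)$, and the support of any other representative contains $\supp(C)$.

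The main obstacle is the uniqueness step: one must convert the intuitive assertion ``a compactly supported polyhedral top-dimensional cycle in $L$ is zero'' into a rigorous statement within the polyhedral chain complex, without accidentally invoking circularity with the homology calculation it is meant to refine. The decomposition $\lambda=\lambda_+-\lambda_-$ from Lemma~\ref{L:pm_decomp} together with Corollary~\ref{C:disjoint_supports} is precisely the technical device that makes this rigorous, by ensuring that positive and negative pieces of $C-C'$ can share interior boundary simplices only through genuine, orientation-consistent cancellation, which in top dimension is exactly the statement that the associated multiplicity function is constant across each codimension-one wall.
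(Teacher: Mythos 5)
The paper does not prove this lemma; it simply cites it as Lemma~2.7 of~\cite{He04}, so there is no in-paper proof against which to compare your argument. Evaluating it on its own terms:

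Your reading of the statement is correct and important: as written, with $L$ of dimension $k$, the group $C^\Delta_{k+1}(L)$ is trivial (degenerate $(k{+}1)$-simplices in a $k$-dimensional space are annihilated by geometric equivalence), so the lemma would be vacuous. Every use of it in the paper (Lemmas~\ref{L:affine_complement} and~\ref{L:reductionToShell}) has $\dim L = k{+}1$, and you rightly work under that reading. Your three-step argument is sound: existence from contractibility of $L$ plus the quasi-isomorphism $C^\Delta_\bullet \hookrightarrow C^{\mathrm{sing}}_\bullet$; uniqueness from top-dimensional rigidity of polyhedral cycles; and the dichotomy from identifying $C^\Delta_{k+1}(X)$ with polyhedral $(k{+}1)$-chains on $L$ supported in $X$, together with the fact that $\supp(C) \subseteq \supp(\lambda)$ for every representative $\lambda$ of $C$. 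The one place I would tighten the exposition is the uniqueness step: what really does the work is that, in the representative guaranteed by Lemma~\ref{L:pm_decomp}, adjacent orientation-preserving simplices contribute a shared $k$-face to the boundary with \emph{opposite} signs, so $\partial\lambda = 0$ forces adjacent coefficients to agree and forces coefficients on ``outer'' faces of $|\lambda|$ to vanish; connectivity of $L$ minus the $(k{-}1)$-skeleton then propagates zero everywhere. Corollary~\ref{C:disjoint_supports} is stated in terms of supports of $\partial\lambda_\pm$ rather than of jumps of a multiplicity function, so invoking it for the ``no wall'' step is more of a gesture than a literal application; it would be cleaner either to make the face-cancellation argument directly from Lemma~\ref{L:pm_decomp}, or to use Corollary~\ref{C:disjoint_supports} to conclude $\partial\lambda_\pm = 0$ and then argue that a nonzero positive $(k{+}1)$-cycle in $L$ would have a boundary face, a contradiction. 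Either way the idea is correct and the proof goes through.
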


\section{Higher convexity of coamoeba complements}\label{S:two}

We develop some properties of higher convexity and use them to show that the
complement of a lifted coamoeba of a variety of codimension $k{+}1$ is $k$-convex.
Gromov~\cite[\S$\tfrac{1}{2}$]{Gromov} showed that the intersection of two $k$-convex
subsets of $\R^n$ is again $k$-convex. 

\begin{proposition}\label{P:intersection}
  If $X,Y\subset\R^n$ are $k$-convex, then so is $X\cap Y$.
\end{proposition}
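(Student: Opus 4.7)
The plan is to argue directly from Lemma~\ref{L:2.7}, exploiting the uniqueness of the bounding chain of a reduced top-dimensional cycle in an affine $(k{+}1)$-plane.

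Fix an affine $(k{+}1)$-plane $L\subset\R^n$ and a reduced polyhedral $k$-cycle $c$ in $L\cap(X\cap Y)$ whose class $[c]$ vanishes in $\witi{H}_k(X\cap Y)$. I want to show $[c]=0$ in $\witi{H}_k(L\cap X\cap Y)$. Since $c\subset X$ and $[c]=0$ in $\witi{H}_k(X\cap Y)$, the composite inclusion $L\cap X\cap Y\hookrightarrow X\cap Y\hookrightarrow X$ sends $[c]$ to $0$ in $\witi{H}_k(X)$. Symmetrically $[c]$ vanishes in $\witi{H}_k(Y)$.

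By Lemma~\ref{L:2.7} applied in the affine space $L$, there is a unique polyhedral chain $C\in C^\Delta_{k+1}(L)$ with $\partial C=c$. The lemma also gives the criterion that $[c]$ is nonzero in the reduced homology of an open subset $U\subset L$ containing $\supp(c)$ precisely when $\supp(C)$ meets $L\smallsetminus U$. Applying this with $U=L\cap X$: the $k$-convexity of $X$ forces $[c]=0$ in $\witi{H}_k(L\cap X)$, and hence $\supp(C)\subset L\cap X$. Applying the same argument with $U=L\cap Y$ and using $k$-convexity of $Y$ gives $\supp(C)\subset L\cap Y$.

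Combining, $\supp(C)\subset L\cap X\cap Y$, so $C$ is a polyhedral $(k{+}1)$-chain in $L\cap X\cap Y$ with $\partial C=c$, which shows $[c]=0$ in $\witi{H}_k(L\cap X\cap Y)$. Since $c$ was an arbitrary $k$-cycle in $L\cap(X\cap Y)$ bounding in $X\cap Y$, this establishes the injectivity of $\witi{H}_k(L\cap X\cap Y)\to\witi{H}_k(X\cap Y)$.

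There is no real obstacle; the argument is a straightforward double application of Lemma~\ref{L:2.7}. The only subtle point is to make sure we apply $k$-convexity of $X$ and $Y$ to deduce vanishing in the \emph{sliced} homology groups $\witi{H}_k(L\cap X)$ and $\witi{H}_k(L\cap Y)$ separately, and then to feed both conclusions into the uniqueness clause of Lemma~\ref{L:2.7} to control $\supp(C)$.
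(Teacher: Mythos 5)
Your proof is correct, but it takes a genuinely different route from the paper's.

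The paper argues via Mayer--Vietoris: comparing the Mayer--Vietoris sequences for $X\cup Y$ and for $L\cap(X\cup Y)$, the key observation is that $\witi{H}_{k+1}(L\cap(X\cup Y))=0$ because $L$ is a $(k{+}1)$-dimensional affine space, so the boundary map is zero and the map $i\colon\witi{H}_k(L\cap X\cap Y)\to\witi{H}_k(L\cap X)\oplus\witi{H}_k(L\cap Y)$ is injective; combining this with the injectivity of $\iota_X\oplus\iota_Y$ supplied by the $k$-convexity hypotheses and chasing the right-hand commuting square gives injectivity of $\iota_{X\cap Y}$. Your argument instead applies Lemma~\ref{L:2.7} twice, using the uniqueness of the bounding polyhedral $(k{+}1)$-chain $C$ in $L$: the $k$-convexity of $X$ (resp.\ $Y$) forces $\supp(C)$ to avoid $L\smallsetminus X$ (resp.\ $L\smallsetminus Y$), whence $\supp(C)\subset L\cap X\cap Y$ and $[c]$ already bounds there. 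Both proofs exploit the same underlying fact that $L$ has dimension $k{+}1$, but they package it differently: the Mayer--Vietoris proof is purely homological, uses no polyhedral-chain machinery, and would transfer to more abstract settings; yours is more geometric and explicit, localizing the obstruction in the support of the canonical filling chain, which is perhaps more in the spirit of the rest of the paper (where Lemma~\ref{L:2.7} and support-tracking are used repeatedly, e.g.\ in Lemma~\ref{L:affine_complement} and Lemma~\ref{L:reductionToShell}). One small caveat: your argument relies on the hypotheses built into Lemma~\ref{L:2.7}, namely that $L\cap X$, $L\cap Y$ and $L\cap X\cap Y$ are open in $L$ and that cycles are represented by polyhedral chains; these are standing assumptions of the paper (``$X$ will always be a triangulable open subset of $\R^n$'' and Proposition~2.3 of Henriques), so the argument is sound, but it is worth making the dependence explicit.
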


\begin{proof}
 Let $L$ be an affine $(k{+}1)$-plane in $\R^n$.
 Consider the Mayer-Vietoris sequences for $X\cup Y$ and $L\cap(X\cup Y)$, 
\[
 \xymatrix{
  \witi{H}_{k+1}(L\cap(X\cup Y)) \ar[d]^{\iota_{X\cup Y}} \ar[r]^(.52){\partial}&
  \witi{H}_{k}(L\cap(X\cap Y)) \ar[d]^{\iota_{X\cap Y}} \ar[r]^(.43){i} &
  \witi{H}_{k}(L\cap X)\oplus\witi{H}_k(L\cap Y)
     \ar[d]_{\iota_X\oplus\iota_Y}\\
  \witi{H}_{k+1}(X\cup Y) \ar[r]^(.52){\partial}&
  \witi{H}_{k}(X\cap Y)  \ar@{->}[r]^(.43){i}&
  \witi{H}_{k}(X)\oplus\witi{H}_k(Y).
 }
\]
 Since $L$ is an affine space of dimension $k{+}1$, 
 $\witi{H}_{k+1}(L\cap(X\cup Y))=0$, so the map $i$ in the top row is injective.
 As $X$ and $Y$ are $k$-convex, the map $\iota_X\oplus\iota_Y$ is injective.
 It follows from the right commutative square that the map $\iota_{X\cap Y}$ is also
 injective. 
\end{proof}

An important class of $k$-convex spaces are complements of collections of 
codimension $k{+}1$ affine linear subspaces.
A collection $S$ of affine subspaces of $\R^n$ is \demph{locally finite} if at most
finitely many members of $S$ meet any given compact subset of $\R^n$.

\begin{lemma}\label{L:affine_complement}
 Let $S$ be a locally finite collection of affine linear subspaces of\/ $\R^n$ of
 codimension $k{+}1$.
 Then its complement $\R^n\smallsetminus S$ is $k$-convex.
\end{lemma}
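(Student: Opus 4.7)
The plan is to first establish $k$-convexity for the complement of a single affine subspace, then extend to finite collections using Proposition~\ref{P:intersection}, and finally reduce the locally finite case to the finite case using Lemma~\ref{L:2.7}.

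First I would show that if $S = \{A\}$ consists of a single affine subspace $A$ of codimension $k{+}1$, then $\R^n \setminus A$ is $k$-convex. Fix an affine $(k{+}1)$-plane $L \subset \R^n$ with direction space $V(L)$, and similarly denote the direction space of $A$ by $V(A)$. When $V(L) \oplus V(A) \neq \R^n$ (the non-transverse case, which includes $L \cap A = \emptyset$), the intersection $L \cap A$ is either empty or an affine subspace of $L$ of codimension $c \leq k$, so $L \setminus (L \cap A)$ deformation retracts onto a sphere $S^{c-1}$ with $c{-}1 < k$. This gives $\witi{H}_k(L \setminus (L \cap A)) = 0$ and trivial injectivity. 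When $L$ is transverse to $A$, $L \cap A$ is a single point $p$ and $V(L)$ is a linear complement to $V(A)$, so the splitting $\R^n = A + V(L)$ identifies $\R^n \setminus A$ with $A \times (V(L) \setminus \{0\})$ and the inclusion $L \setminus \{p\} \hookrightarrow \R^n \setminus A$ with $\{p\} \times (V(L) \setminus \{0\}) \hookrightarrow A \times (V(L) \setminus \{0\})$, which is a homotopy equivalence since $A$ is contractible. Hence the induced map on $\witi{H}_k$ is even an isomorphism. Iterating Proposition~\ref{P:intersection} then shows that $\R^n \setminus \bigcup_{A \in S_0} A$ is $k$-convex for every finite $S_0 \subseteq S$.

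It remains to pass from finite to locally finite. Take $c \in \witi{Z}_k(L \cap (\R^n \setminus S))$ with $[c] = 0$ in $\witi{H}_k(\R^n \setminus S)$, write $c = \partial D$ for a $(k{+}1)$-chain $D$ in $\R^n \setminus S$, and let $C \in C^\Delta_{k+1}(L)$ be the unique fill of $c$ provided by Lemma~\ref{L:2.7}. Since $\supp(C) \cup \supp(D)$ is compact and $S$ is locally finite, only a finite subfamily $S_0 \subseteq S$ meets this compact set; we may also arrange that $S_0$ contains every $A \in S$ meeting $\supp(C)$. Then $D$ lies in $\R^n \setminus S_0$, so $[c] = 0$ in $\witi{H}_k(\R^n \setminus S_0)$, and by the finite case, $[c] = 0$ in $\witi{H}_k(L \cap (\R^n \setminus S_0))$. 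Lemma~\ref{L:2.7} translates this into $\supp(C) \cap S_0 = \emptyset$. By the choice of $S_0$, this forces $\supp(C) \cap S = \emptyset$, and a final application of Lemma~\ref{L:2.7} gives $[c] = 0$ in $\witi{H}_k(L \cap (\R^n \setminus S))$, as desired.

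The single-subspace case and the iteration via Proposition~\ref{P:intersection} are routine. The main subtlety, and the point I would want to handle most carefully, is the passage from the finite to the locally finite setting. This works precisely because Lemma~\ref{L:2.7} characterizes nonvanishing of $[c]$ by the purely set-theoretic condition that $\supp(C)$ meets the obstruction set; this freedom lets one replace the infinite family $S$ by any finite enlargement of the $A$'s near $\supp(C)$ without changing the homological conclusion.
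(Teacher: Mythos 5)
Your proof is correct and follows essentially the same route as the paper's: handle a single subspace via the homotopy equivalence $\R^n \smallsetminus A \simeq A \times (L \smallsetminus \{p\})$ in the transverse case (with the nontransverse case giving vanishing), iterate Proposition~\ref{P:intersection} for finite collections, and use Lemma~\ref{L:2.7} plus local finiteness to reduce the infinite case to a finite subcollection near $\supp(C)$. The only cosmetic difference is that you phrase the reduction step contrapositively (assume $[c]$ dies in $\R^n\smallsetminus S$ and show it dies in $L$) while the paper argues directly (assume $[c]\neq0$ in $L\smallsetminus S$ and push nonvanishing forward); both are logically equivalent.
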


\begin{proof}
 Let $L$ be an affine linear space of dimension $k{+}1$.
 We show that the map 
\[
   \iota\ \colon\ \witi{H}_{k}(L\smallsetminus S)
    \ \longrightarrow\ 
   \witi{H}_{k}(\R^n\smallsetminus S)
\]
 induced by the inclusion $L\smallsetminus S\hookrightarrow\R^n\smallsetminus S$ is
 injective. 
 First, suppose that $S$ consists of a single subspace $M$ of codimension $k{+}1$.
 If $L\cap M=\emptyset$ or if $\dim L\cap M\geq 1$, then 
 $\witi{H}_{k}(L\smallsetminus M)=0$, so there is nothing to show.
 Otherwise, $L\cap M=\defcolor{p}$, a point.
 Since we have the isomorphism 
 $\R^n\smallsetminus M\simeq (L\smallsetminus p)\times M$ and $M$ is contractible, the
 inclusion $L\smallsetminus S\to\R^n\smallsetminus S$ is a homotopy equivalence, which
 implies that $\iota$ is an isomorphism.
 Proposition~\ref{P:intersection} implies the result by induction if $S$ is finite.

 Now suppose that $S$ is not finite.
 Let $c\in Z_{k}(L\smallsetminus S)$ be a $k$-cycle whose image 
 $[c]\in \witi{H}_{k}(L\smallsetminus S)$ is nonzero.
 By Lemma~\ref{L:2.7}, there is a unique polyhedral chain $C$ in $C^\Delta_{k+1}(L)$ with  
 $\partial C=c$, and the support of $C$ meets $S$.
 Let $\lambda\in\Pl_{k+1}(X)$ be any chain representing $C$, then its support meets $S$.
 Let $S_{{\rm finite}}$ be the (finite) union of affine linear spaces from $S$ that meet
 the support of $\lambda$.
 By Lemma~\ref{L:2.7} again, 
 $[c]\in\witi{H}_{k}(L\smallsetminus S_{{\rm finite}})$ is nonzero, and so
 $\iota[c]\in\witi{H}_{k}(\R^n\smallsetminus S_{{\rm finite}})$ is nonzero, as
 $S_{{\rm finite}}$ is a finite collection of affine subspaces of codimension $k{+}1$.
 By the commutative diagram
\[
 \xymatrix{
   \witi{H}_{k}(L\smallsetminus S) \ar[d]_{\iota} \ar[r]^(.43){i_*}   &
   \witi{H}_{k}(L\smallsetminus S_{{\rm finite}}) \ar[d]_{\iota_{{\rm finite}}} \\
   \witi{H}_{k}(\R^n\smallsetminus S) \ar[r]^{i_*}       &
   \witi{H}_{k}(\R^n\smallsetminus S_{{\rm finite}}) 
 }
\]
 we have that $0\neq \iota_{{\rm finite}} i_*[c]= i_*\iota[c]$, 
 so  $\iota[c]\in\witi{H}_{k}(\R^n\smallsetminus S)$ is also nonzero.
\end{proof}

The lifted shell of a subvariety of the torus $(\C^\times)^n$ of pure codimension $k{+}1$ 
is a locally finite collection of affine subspaces of $\R^n$ of codimension $k{+}1$.
We deduce a corollary.

\begin{corollary}
 The complement of the lifted shell of a subvariety of $(\C^\times)^n$ of pure codimension
 $k{+}1$ is $k$-convex.
\end{corollary}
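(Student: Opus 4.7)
The plan is to verify that the lifted shell fits the hypotheses of Lemma~\ref{L:affine_complement} and then apply that lemma directly. The main work is bookkeeping on dimensions and invoking the structural facts collected in Section~\ref{SS:coamoeba}; there is no genuine obstacle once those are in hand.

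First I would unpack the definition of the shell. If $V\subset(\C^\times)^n$ has pure codimension $k{+}1$, then $V$ has pure dimension $d=n{-}k{-}1$, and a tropical fan $\Sigma$ for $V$ is a pure polyhedral complex of dimension $d$. By the structural results recalled in Section~\ref{SS:coamoeba}, for each maximal cone $\sigma\in\Sigma$ the initial variety $\ini_\sigma V$ is $\C^\times_{\langle\sigma\rangle}$-equivariant and is supported on a finite union of cosets of $\C^\times_{\langle\sigma\rangle}$, so that $\coscrA(\ini_\sigma V)$ is a finite union of cosets of $\T_{\langle\sigma\rangle}$, each of real dimension $\dim\sigma=d$. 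Taking the union over the (finitely many) maximal cones shows that ${\it sh}(V)$ is a finite arrangement of affine subgroups of $\T^n$ of dimension $d=n{-}k{-}1$.

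Next I would pass to the universal cover. The lifted shell $\lsh(V)$ is the closure of the preimage of ${\it sh}(V)$ under $\R^n\to\T^n$, so it is a $\Z^n$-periodic collection of translates of rational linear subspaces of $\R^n$, each of dimension $n{-}k{-}1$, i.e.\ of codimension $k{+}1$. Since ${\it sh}(V)$ is a finite arrangement in $\T^n$, only finitely many of the lifts meet any fundamental domain for $\Z^n$, and hence only finitely many meet any compact subset of $\R^n$. Therefore $\lsh(V)$ is a locally finite collection of codimension $k{+}1$ affine subspaces of $\R^n$.

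Finally I would conclude by applying Lemma~\ref{L:affine_complement} to $S=\lsh(V)$: its complement $\R^n\smallsetminus\lsh(V)$ is $k$-convex, which is exactly the claim of the corollary. The potentially delicate point is the codimension assertion for shell components; this is why I would carefully invoke the fact (from Section~\ref{SS:coamoeba}) that for a maximal cone $\sigma$ the initial variety $\ini_\sigma V$ is a union of $\C^\times_{\langle\sigma\rangle}$-cosets, so its coamoeba is genuinely a union of $d$-dimensional affine subgroups rather than something lower-dimensional.
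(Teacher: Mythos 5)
Your proof is correct and takes essentially the same route as the paper: it observes that the lifted shell is a locally finite arrangement of codimension $k{+}1$ affine subspaces of $\R^n$ (using the structure of initial coamoebas for maximal cones recalled in Section~\ref{SS:coamoeba}) and then applies Lemma~\ref{L:affine_complement}. The paper states this more tersely, but you have simply filled in the bookkeeping it leaves implicit.
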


In Subsection~\ref{SS:proof} we prove the following topologial result relating 
the complement of a coamoeba to the complement of its shell.
Let $\pi$ be an affine subgroup of $\T^n$ of dimension $k{+}1$.
Note that $\pi$ is orientable, has an affine structure induced
from its universal cover, and may be triangulated. 
A chain $\lambda\in\Pl_{k+1}(\pi)$ is \demph{positive} if there is an orientation of $\pi$
for which we may write $\lambda=\sum_i a_i\lambda_i$ where each $a_i>0$ and each simplex
$\lambda_i\colon\Delta_{k+1}\to\pi$ is affine linear and preserves the orientation.

\begin{lemma}\label{L:crucial}
 Let $V\subset(\C^\times)^n$ be a subvariety of codimension $k{+}1$ and 
 let $\pi\subset\T^n$ be an affine subgroup of dimension $k{+}1$ which is the coamoeba of
 an affine subgroup $a\C^\times_N$ that meets $V$ transversally.
 Let $\gamma\in Z_{k}(\pi\smallsetminus\overline{\coscrA(V)})$
 be a nonzero cycle which bounds a positive chain $\lambda\in \Pl_{k+1}(\pi)$, 
 $\partial\lambda=\gamma$.
 If the support of $\lambda$ meets $\overline{\coscrA(V)}$, then it meets the shell of
 $V$. 
\end{lemma}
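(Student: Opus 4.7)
The plan is to argue by contradiction. Suppose $\supp(\lambda)\cap{\it sh}(V)=\emptyset$ while $\supp(\lambda)\cap\overline{\coscrA(V)}\neq\emptyset$. Proposition~1.1 applied to $V$ gives
\[
  \overline{\coscrA(V)}\ =\ \coscrA(V)\,\cup\,\bigcup_{\sigma\in\Sigma\smallsetminus\{0\}}\coscrA(\ini_\sigma V)\,,
\]
where $\Sigma$ is a tropical fan for $V$ of pure dimension $d$. Pick a point $p\in\supp(\lambda)\cap\overline{\coscrA(V)}$ and let $\sigma\in\Sigma$ be a cone of smallest dimension with $p\in\coscrA(\ini_\sigma V)$, using the convention $\ini_{\{0\}}V=V$. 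Since ${\it sh}(V)$ is the union of the $\coscrA(\ini_\tau V)$ for maximal $\tau$, the assumption $\supp(\lambda)\cap{\it sh}(V)=\emptyset$ forces $\sigma$ to be non-maximal, so $\dim\sigma<d$.

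The heart of the argument is the local claim that whenever $\supp(\lambda)$ meets $\coscrA(\ini_\sigma V)$ for a non-maximal $\sigma\in\Sigma$, $\supp(\lambda)$ also meets $\coscrA(\ini_\tau V)$ for some $\tau\in\Sigma$ strictly containing $\sigma$. Iterating strictly increases $\dim\sigma$, so after at most $d-\dim\sigma$ steps we reach a maximal cone $\tau_{\max}$ and obtain $\supp(\lambda)\cap\coscrA(\ini_{\tau_{\max}}V)\neq\emptyset$, contradicting the hypothesis. To establish the claim I would quotient by the subtorus $\C^\times_{\langle\sigma\rangle}$ under which $\ini_\sigma V$ is equivariant: writing $q\colon(\C^\times)^n\to(\C^\times)^n/\C^\times_{\langle\sigma\rangle}$, the image $W:=q(\ini_\sigma V)$ is a codimension-$(k{+}1)$ subvariety of a torus of complex dimension $n-\dim\sigma$; the projection of $\pi$ is an affine subgroup $\pi'$ of the quotient real torus of dimension $k{+}1-\dim\sigma$ meeting $q(a\C^\times_N)$ transversally (by the openness of transversality, possibly after a small perturbation of $\pi$ that does not disturb $[\gamma]$); the chain $\lambda$ descends to a positive chain $\lambda'$ on $\pi'$ whose boundary avoids $\overline{\coscrA(W)}$. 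Under the natural bijection between nonzero cones in a tropical fan for $W$ and cones of $\Sigma$ strictly containing $\sigma$, it suffices to show that $\supp(\lambda')$ meets $\scrP^\infty(W)$.

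The hard step is the last one: forcing $\supp(\lambda')$ to reach $\scrP^\infty(W)$ rather than staying inside $\coscrA(W)$. Here I would combine Lemma~\ref{L:2.7} applied in the affine space underlying $\pi'$ with the positivity decomposition of Lemma~\ref{L:pm_decomp} and Corollary~\ref{C:disjoint_supports}. Since $\partial\lambda'$ lies outside $\overline{\coscrA(W)}$, Lemma~\ref{L:2.7} makes $\lambda'$ the unique polyhedral filling of its boundary, so if $\supp(\lambda')$ avoided $\scrP^\infty(W)$ one could construct $\lambda'$ from positive simplices meeting only the interior of $\overline{\coscrA(W)}$; a local orientation/degree argument, powered by positivity and the transverse local structure of $\coscrA(W)$ in $\pi'$, should then force $\supp(\lambda')$ to hit $\partial\overline{\coscrA(W)}\subset\scrP^\infty(W)$. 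The main obstacles are verifying that transversality genuinely descends at each inductive stage and making precise the local crossing argument, so that positivity of $\lambda$ — not merely its existence — forces the crossing.
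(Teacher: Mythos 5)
Your reduction strategy — quotient by $\C^\times_{\langle\sigma\rangle}$, pass to $W:=\ini_\sigma V/\C^\times_{\langle\sigma\rangle}$, and use the bijection between cones of a tropical fan for $W$ and cones of $\Sigma$ strictly containing $\sigma$ to induct up to a maximal cone — is essentially the same inductive skeleton the paper uses (the proof of Corollary~\ref{C:shell}, which the paper then says transfers to Lemma~\ref{L:crucial} verbatim). So the outer layer of your argument is sound in spirit. There are, however, two concrete problems.

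First, a dimension error that would break the induction if taken literally: you assert that the image $\pi'$ of $\pi$ in $\T^n/\T_{\langle\sigma\rangle}$ has dimension $k{+}1-\dim\sigma$. In fact $\pi'$ must again have dimension $k{+}1$. Writing $\pi=\T_N$, one has $\dim\pi'=k{+}1-\dim(N\cap\langle\sigma\rangle)$, and transversality of $a\C^\times_N$ with $V$ (which is $\C^\times_{\langle\sigma\rangle}$-equivariant after replacing $V$ by $\ini_\sigma V$) forces $N\cap\langle\sigma\rangle=0$: otherwise a point of $V\cap a\C^\times_N$ would carry a positive-dimensional $\C^\times_{N\cap\langle\sigma\rangle}$-orbit inside $V\cap a\C^\times_N$, contradicting that a transverse intersection of complementary dimension is finite. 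Since $W$ still has codimension $k{+}1$ in the quotient torus, you need $\pi'$ of dimension $k{+}1$ for the inductive hypothesis to even be applicable; a $\pi'$ of smaller dimension would make the lemma's hypotheses impossible to match.

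Second, and more seriously, the heart of the lemma is the step you explicitly leave open: showing that $\supp(\lambda')$ must reach $\scrP^\infty(W)$. You gesture at ``a local orientation/degree argument'' combining Lemma~\ref{L:2.7} with the positivity decomposition, but this is not an argument, and in fact the tools you cite are not what the paper uses here — Lemma~\ref{L:2.7} and Lemma~\ref{L:pm_decomp} feed into the \emph{reduction} (Lemma~\ref{L:reductionToShell}), not into this step. The paper's argument is a \emph{global} intersection-theoretic one. Assume $\scrP^\infty(V)\cap|\lambda|=\emptyset$; then $V\cap\Arg^{-1}(|\lambda|^\circ)$ is compact (any unbounded sequence in it would produce a phase-limit point in $|\lambda|$). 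Projecting to the radial factors gives compact sets $K\subset\R^{n-k-1}_>$ and $K_N\subset\R^{k+1}_>$, and one forms a $(2k{+}3)$-chain $\mu=[1,a]\times(D\times\lambda)$ with $a\notin K$ and $D$ a compact domain containing $K_N$. The cycle $\partial\mu$ is null-homologous in $(\C^\times)^n$, so $[\partial\mu]\cdot\overline V=0$ in $\P^n$; yet only the face $\{1\}\times(D\times\lambda)$ meets $V$, it meets $V$ transversally, and since $D\times\lambda$ is a \emph{positive} real $(2k{+}2)$-chain and $V$ is a complex subvariety of complex codimension $k{+}1$, every intersection contributes positively and the count is nonzero — a contradiction. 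The positivity of $\lambda$ enters precisely here: it guarantees the intersection number is a sum of positive local contributions, not a signed sum that could vanish. Without this construction the lemma does not close, so as written your proposal has a genuine gap at its central step.
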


Let $\lcoscrA(V)$ and $\lsh(V)$ be the lifted coamoeba and lifted shell of $V$, which are
closed subsets of $\R^n$.
The key step in showing that $\R^n\smallsetminus \lcoscrA(V)$ is $k$-convex is to
reduce it to $\R^n\smallsetminus\lsh(V)$.
An affine subspace $L\subset\R^n$ is \demph{rational} if it is the translate of a rational
linear subspace of $\R^n$.
A rational affine subspace $L\subset\R^n$ of dimension $k{+}1$ is \demph{general for $V$}
if there is an affine subgroup $a\C^\times_N\subset(\C^\times)^n$ which meets $V$
transversally and $L$ is the lifted coamoeba of $a\C^\times_N$.
Kleiman's Transversality Theorem~\cite{KL74} implies that for any affine rational subspace
$L\subset\R^n$ of dimension $k{+}1$, its translates which are general for $V$ form a dense
subset of all translates.

\begin{lemma}\label{L:reductionToShell}
 For any affine rational $(k{+}1)$-plane $L$ of\/ $\R^n$ that is general for $V$, the map
 on reduced homology induced by the inclusion
\[
   i_*\ \colon\ \witi{H}_{k}(L\smallsetminus\lcoscrA(V))\ 
    \longrightarrow\ \witi{H}_{k}(L\smallsetminus\lsh(V))
\]
 is an injection.
\end{lemma}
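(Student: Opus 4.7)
The plan is to use Lemma~\ref{L:2.7} to convert homological nontriviality on both sides of $i_*$ into support conditions on the \emph{same} bounding chain $C$ of $c$, and then to use Lemma~\ref{L:crucial} to pass from ``support of $C$ meets $\lcoscrA(V)$'' to ``support of $C$ meets $\lsh(V)$,'' after projecting from $L$ down to its image in $\T^n$.

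Let $[c]\in\witi{H}_k(L\smallsetminus\lcoscrA(V))$ be nonzero, with $c$ a polyhedral cycle. Since $L\smallsetminus\lcoscrA(V)$ is an open subset of the $(k{+}1)$-dimensional affine space $L$, Lemma~\ref{L:2.7} produces a unique polyhedral chain $C\in C^\Delta_{k+1}(L)$ with $\partial C=c$, and $\supp(C)$ meets $L\cap\lcoscrA(V)$. The very same chain $C$ is the unique $(k{+}1)$-chain in $L$ bounding $c$, so Lemma~\ref{L:2.7} applied now to $L\smallsetminus\lsh(V)$ tells us that $i_*[c]\neq 0$ if and only if $\supp(C)$ meets $\lsh(V)$. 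It therefore suffices to prove: if $\supp(C)$ meets $\lcoscrA(V)$, it also meets $\lsh(V)$.

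By Lemma~\ref{L:pm_decomp}, represent $C$ by $\lambda=\lambda_+-\lambda_-$, a difference of positive chains with $\supp(\lambda)=\supp(C)$. Since $\supp(\lambda)$ meets $\lcoscrA(V)$, without loss of generality $\supp(\lambda_+)$ does; by Corollary~\ref{C:disjoint_supports}, $\supp(\partial\lambda_+)\subset\supp(c)\subset L\smallsetminus\lcoscrA(V)$. Let $p\colon\R^n\to\T^n$ be the universal covering. Because $L$ is the lifted coamoeba of an affine subgroup $a\C^\times_N$ transverse to $V$, the restriction $p|_L$ is a covering onto the affine subgroup $\pi:=\coscrA(a\C^\times_N)\subset\T^n$ of dimension $k{+}1$. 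After subdividing $\lambda_+$ so that each simplex embeds under $p|_L$, the pushforward $p_*\lambda_+$ is a positive chain in $\pi$ with $\supp(p_*\lambda_+)=p(\supp(\lambda_+))$ and boundary $p_*(\partial\lambda_+)=\partial(p_*\lambda_+)$ supported in $\pi\smallsetminus\overline{\coscrA(V)}$ (using $p^{-1}(\overline{\coscrA(V)})=\lcoscrA(V)$). Lemma~\ref{L:crucial} applied to $p_*\lambda_+$ in $\pi$ then shows that $\supp(p_*\lambda_+)$, which meets $\overline{\coscrA(V)}$, must meet ${\it sh}(V)$. Lifting along $p$ yields a point of $\supp(\lambda_+)\cap\lsh(V)$, so $\supp(C)\cap\lsh(V)\neq\emptyset$.

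The main obstacle I anticipate is verifying the hypotheses of Lemma~\ref{L:crucial} for $p_*\lambda_+$, and in particular ensuring that $p_*(\partial\lambda_+)$ is a \emph{nonzero} cycle in $\pi\smallsetminus\overline{\coscrA(V)}$: if $\lambda_+$ wraps sufficiently around $L$ that its image wraps around $\pi$ with cancelling multiplicities on boundary simplices, the hypothesis may fail. This should be fixable either by refining the decomposition $C=\lambda_+-\lambda_-$ so that each positive piece lives in a fundamental domain of the lattice $\Z^n\cap\langle L\rangle$ acting on $L$, or by first applying Remark~\ref{R:covers} to replace $V$ by a pullback $\varphi_m^{-1}(V)$ on a high enough finite cover, disentangling the wrap-around before the projection is performed.
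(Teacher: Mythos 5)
Your proof takes essentially the same route as the paper's: use Lemma~\ref{L:2.7} twice (with the same bounding chain $C$) to reduce the injectivity of $i_*$ to the implication ``$\supp(C)$ meets $\lcoscrA(V)$ $\Rightarrow$ $\supp(C)$ meets $\lsh(V)$,'' decompose $C=\lambda_+-\lambda_-$ via Lemma~\ref{L:pm_decomp} and Corollary~\ref{C:disjoint_supports} to extract a positive chain $\lambda_+$ whose boundary avoids $\lcoscrA(V)$, push forward to $\pi=\coscrA(a\C^\times_N)\subset\T^n$, and invoke Lemma~\ref{L:crucial}. The one point you flag as an obstacle --- that the pushforward $\varphi_*(\partial\lambda_+)$ could be zero if $\lambda_+$ wraps around $\pi$ --- is exactly the issue the paper handles, and the second fix you propose is the paper's fix: pass to a finite cover $\R^n\to\R^n/m\Z^n$ for $m$ large enough that $\supp(\lambda_+)$ sits inside a fundamental chamber of $m\Z^n$, replacing $V$ and $\coscrA(V)$ by their pullbacks as in Remark~\ref{R:covers}. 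So the argument is correct and matches the paper; you should carry out the finite-cover reduction explicitly rather than leaving it as an anticipated obstacle, but you have identified the right resolution.
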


\begin{proof}
 Let $[c]\in\witi{H}_{k}(L\smallsetminus\lcoscrA(V))$ be nonzero.
 By Lemma~\ref{L:2.7}, there is a polyhedral chain $C\in C^\Delta_{k+1}(L)$ with 
 $\partial C=c$ and $\supp(C)\cap\lcoscrA(V)\neq\emptyset$.
 Let $\lambda\in \Pl_{k+1}(L)$ represent $C$ with $\supp(C)=\supp(\lambda)$.

 Since $\supp(\lambda)$ meets $\lcoscrA(V)$, if we write
 $\lambda=\lambda_+-\lambda_-$ as we did following Lemma~\ref{L:pm_decomp}, then 
 one of $\lambda_\pm$ has support meeting $\lcoscrA(V)$.
 Suppose that it is $\lambda_+$.
 By Corollary~\ref{C:disjoint_supports}, we have 
 $\gamma=\gamma_+-\gamma_-$, where $\gamma_\pm:=\partial\lambda_\pm$, and
 $\gamma_\pm\in \Pl_{k}(L\smallsetminus\lcoscrA(V))$.
 Under the cannonical projection $\varphi\colon\R^n\to\R^n/\Z^n=\T^n$ the image
 $\pi=\varphi(L)$ of $L$ is an affine subgroup of $\T^n$.

 To apply Lemma~\ref{L:crucial}, we need to ensure that
 $\varphi_*(\gamma_+)$ is nonzero.
 For this, we may replace the map $\varphi\colon\R^n\to\T^n=\R^n/\Z^n$
 by a finite cover $\R^n\to\R^n/m\Z^n$, where $m$ is large enough so that the supports of
 $\gamma_+$ and $\lambda_+$ lie in the interior of a fundamental chamber of the lattice
 $m\Z^n$.
 Then we need to replace $V$ and its coamoeba $\coscrA(V)$ by their inverse images as in
 Remark~\ref{R:covers}.

 Thus we may assume that $\varphi_*\gamma_+\in Z_{k-1}(\pi\smallsetminus\overline{\coscrA(V)})$ is a
 nonzero cycle which bounds the positive cycle $\varphi_*\lambda_+$, and the support of
 $\varphi_*\lambda_+$ meets $\coscrA(V)$. 
 By Lemma~\ref{L:crucial}, the support of $\varphi_*\lambda_+$ meets $\lsh(V)$.
 It follows that the support of $\lambda_+$ meets $\lsh(V)$ and therefore the support of
 $\lambda$  meets $\lsh(V)$.
 Applying Lemma~\ref{L:2.7} again, we deduce that $i_*[c]$ is nonzero in 
 $\witi{H}_{k}(L\smallsetminus\lsh(V))$.
\end{proof}

We now deduce the main result of this section.

\begin{theorem}
 Let $V\subset(\C^\times)^n$ be an algebraic variety of codimension $k{+}1$.
 Then the complement $\R^n\smallsetminus\lcoscrA(V)$ of its lifted coamoeba is
 $k$-convex. 
\end{theorem}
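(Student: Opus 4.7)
The plan is to reduce the $k$-convexity of $\R^n\smallsetminus\lcoscrA(V)$ to that of $\R^n\smallsetminus\lsh(V)$ (the preceding corollary) via a commutative diagram whose top row is the injection supplied by Lemma~\ref{L:reductionToShell}. Fix an affine $(k{+}1)$-plane $L\subset\R^n$. Because $\lsh(V)\subseteq\lcoscrA(V)$, the inclusions of these four open sets fit into a commutative square of reduced homology
\[
\xymatrix@R=14pt{
\witi{H}_k(L\smallsetminus\lcoscrA(V)) \ar[r]^{i_*} \ar[d]_{\iota} &
\witi{H}_k(L\smallsetminus\lsh(V)) \ar[d]^{\iota'} \\
\witi{H}_k(\R^n\smallsetminus\lcoscrA(V)) \ar[r]^{j_*} &
\witi{H}_k(\R^n\smallsetminus\lsh(V)),
}
\]
in which $\iota'$ is injective by the preceding corollary.

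I would first prove the theorem for rational $(k{+}1)$-planes $L$ that are general for $V$. In that case Lemma~\ref{L:reductionToShell} gives the injectivity of $i_*$, so the composition $\iota'\circ i_* = j_*\circ\iota$ is injective, which forces $\iota$ to be injective. This settles the theorem for such $L$.

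For an arbitrary $L$, I would invoke density: Kleiman's transversality theorem ensures that rational general $(k{+}1)$-planes are dense among all $(k{+}1)$-planes. Given a nonzero class $[c]\in\witi{H}_k(L\smallsetminus\lcoscrA(V))$ with compactly supported representative $c$, choose a small translation $\psi\colon\R^n\to\R^n$ making $L':=\psi(L)$ rational and general for $V$. For $\psi$ close enough to the identity, $c':=\psi_*(c)$ is a cycle in $L'\smallsetminus\lcoscrA(V)$ and the straight-line homotopy between $c$ and $c'$ stays inside $\R^n\smallsetminus\lcoscrA(V)$, so $\iota[c]=\iota[c']$; if $[c']$ is nonzero in $\witi{H}_k(L'\smallsetminus\lcoscrA(V))$, the rational general case applied to $L'$ yields $\iota[c']\neq 0$, hence $\iota[c]\neq 0$.

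The main obstacle will be securing the nonvanishing of $[c']$ for some arbitrarily small such $\psi$. By Lemma~\ref{L:2.7} applied in $L$ and in $L'$, this amounts to arranging $\psi(\supp(C))\cap\lcoscrA(V)\neq\emptyset$, where $C$ is the unique polyhedral filling of $c$ in $L$; the hypothesis $[c]\neq 0$ guarantees $\supp(C)\cap\lcoscrA(V)\neq\emptyset$, but in principle the intersection could be purely tangential and be destroyed by a generic small translation. I expect this to be handled by exploiting the stratified structure of $\overline{\coscrA(V)}$ coming from the phase limit set $\scrP^\infty(V)$, choosing $\psi$ to push part of $\supp(C)$ into the interior of $\lcoscrA(V)$ along a suitable direction, so that the intersection persists after perturbation.
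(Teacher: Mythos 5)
Your overall strategy matches the paper's: reduce the $k$-convexity of $\R^n\smallsetminus\lcoscrA(V)$ to that of $\R^n\smallsetminus\lsh(V)$ via the same commutative square, using Lemma~\ref{L:reductionToShell} for the top arrow and Lemma~\ref{L:affine_complement} for the right-hand arrow, after first reducing to the case of a rational $(k{+}1)$-plane that is general for $V$. The paper does precisely this, citing the arguments of Henriques's Lemma~3.6 for the reduction to rational $L$, then invoking Kleiman's theorem to move to a nearby general translate.

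However, your reduction step contains a real gap. You propose to pass from an arbitrary $(k{+}1)$-plane $L$ to a rational general one $L'$ by a small \emph{translation} $\psi$, but a translation preserves the direction (the associated linear subspace) of $L$; if $L$ is not rational, no translate $\psi(L)$ is rational. The reduction from arbitrary to rational $L$ genuinely requires perturbing the direction, not merely the offset, and this is exactly the content the paper delegates to Henriques's Lemma~3.6 — an argument that takes a limit over rational subspaces approximating the given direction. Kleiman's theorem only supplies density of general \emph{translates} for a fixed rational direction, not density of rational directions, so citing it for this purpose is a misattribution. (The second issue you raise — whether $[c']$ can vanish after perturbation — is a legitimate concern, and you are right that the paper's one-sentence remark does not address it; the resolution again lives in the structure of Henriques's argument and in Lemma~\ref{L:2.7}, where the nonvanishing is tied to the support of the unique filling $C$ meeting $\lcoscrA(V)$, a condition that is stable under the kind of perturbation used there.)
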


\begin{proof}
 We will show that 
 $\iota\colon \witi{H}_{k}(L\smallsetminus\lcoscrA(V))\to
   \witi{H}_{k}(\R^n\smallsetminus\lcoscrA(V))$ is an injection, for all 
 $(k{+}1)$-dimensional affine subspaces $L$ of $\R^n$.
 The arguments of Lemma~3.6 of~\cite{He04} (for nonnegative classes) hold
 more generally for all classes, and imply that it suffices to show this when 
 $L\subset\R^n$ is a rational affine subspace.
 Moreover, we may replace the subspace $L$ by any sufficiently nearby translate and therefore we
 may assume that $L$ is general for $V$.

 Consider the commutative square
\[
 \xymatrix{
   \witi{H}_{k}(L\smallsetminus\lcoscrA(V)) \ar[d]_{\iota} \ar[r]^{i_*}   &
   \witi{H}_{k}(L\smallsetminus\lsh(V)) \ar[d]_{\iota_{\widehat{{\it sh}}(V)}} \\
   \witi{H}_{k}(\R^n\smallsetminus\lcoscrA(V)) \ar[r]^{i_*}       &
   \witi{H}_{k}(\R^n\smallsetminus\lsh(V)).
 }\ 
\]
 By Lemma~\ref{L:reductionToShell} the map $i_*$ in the first row is injective,
 and by Lemma~\ref {L:affine_complement} the map $\iota_{\widehat{{\it sh}}(V)}$ is injective.
 It follows that the map $\iota$ is injective.
\end{proof}

\subsection{Proof of Lemma~\ref{L:crucial}}\label{SS:proof}
We first consider the case when $V$ is a hypersurface, $k=0$, so that
$\pi\simeq S^1$ is one-dimensional.
This recovers Nisse's result~\cite{Ni09}.

\begin{lemma}\label{Lem:hypersurface}
 Let $V\subset(\C^\times)^n$ be a hypersurface.
 Let $\pi$ be a one-dimensional affine subgroup of \/$\T^n$ which is the coamoeba of a
 one-dimensional affine subgroup of $(\C^\times)^n$ that meets $V$ transversally.
 If $p,q\in\pi$ do not lie in the closure of $\coscrA(V)$, but one
 of the arcs $\arc{pq}$ of $\pi$ they subtend meets $\coscrA(V)$, then that
 arc meets the phase limit set of $V$.
\end{lemma}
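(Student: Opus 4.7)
We argue by contradiction. Suppose $\arc{pq}$ meets $\coscrA(V)$ but does not meet $\scrP^\infty(V)$. By the Proposition, $\overline{\coscrA(V)}=\coscrA(V)\cup\scrP^\infty(V)$, so the set $A:=\arc{pq}\cap\overline{\coscrA(V)}$ is contained in $\coscrA(V)$, is compact and nonempty, and (since $p,q\notin\overline{\coscrA(V)}$) lies in the interior of the arc. Let $\theta_*\in A$ be the point closest to $p$ along $\arc{pq}$. The open sub-arc from $p$ to $\theta_*$ (exclusive) is then disjoint from $\overline{\coscrA(V)}$, so $\theta_*$ is a one-sided boundary point of $A$. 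Since $\theta_*\in \coscrA(V)$, choose $z^*\in V$ with $\Arg(z^*)=\theta_*$.

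The plan is to obtain a contradiction by producing a two-sided neighborhood of $\theta_*$ in $\pi$ inside $\coscrA(V)$. To do so, parameterize translates of $a\C^\times_N$ within $\Arg^{-1}(\pi)$: writing $v\in\Z^n\smallsetminus\{0\}$ for a generator of $N$ and picking a complementary hyperplane, let $u\in\R^{n-1}$ vary over translates, so that each subgroup $(ae^{u})\C^\times_N$ has coamoeba $\pi$. The point $z^*$ lies on exactly one such translate, corresponding to some $u_*$, and $\Arg^{-1}(\pi)\cap V$ is the zero set of the real-analytic family $g_u(t):=f(a e^{u+tv})$ in $(t,u)\in\C\times\R^{n-1}$. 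By Kleiman's transversality theorem, the hypothesis that $a\C^\times_N$ meets $V$ transversally propagates to a Zariski-dense open set of translates, and by continuity we may assume transversality at $u_*$; thus each simple zero $t(u)$ of $g_u$ depends real-analytically on $u$ by the implicit function theorem, with $z^*=a e^{u_*}e^{t(u_*)v}$.

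Consider the real-analytic map $\phi\colon u\mapsto \Im t(u)\in\pi$; its image lies inside $\coscrA(V)\cap\pi$. A direct calculation gives $\phi'(u)=-\Im(\partial_u g/g'_t)$ evaluated at the zero, and one checks that this gradient vanishes iff the logarithmic gradient $c=(x_k\partial f/\partial x_k)_{k=1}^n$ at $z^*$ lies on a common real line in $\C$ (the \emph{logarithmic contour condition} in the $v$-direction). If $\phi$ is submersive at $u_*$, its image contains an open two-sided neighborhood of $\theta_*$ in $\pi$, contradicting the one-sidedness of $A$ at $\theta_*$.

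It remains to rule out the critical case, and this is the main obstacle. The logarithmic contour locus in $V$ is a proper real-analytic subvariety; its image in $\T^n$ has dimension at most $n-2$, so for $\pi$ in generic position its intersection with $\pi$ is empty. Under the transversality hypothesis on $a\C^\times_N$, $\pi$ is sufficiently generic; more importantly, any critical value of $\phi$ in $\pi$ can be shown (via a dimension count in $V\cap\Arg^{-1}(\pi)$ combined with the Proposition's description of accumulation points) to lie in $\scrP^\infty(V)\cap\pi$. Since $\theta_*\notin\scrP^\infty(V)$ by assumption, $z^*$ cannot be critical, $\phi$ is a submersion at $u_*$, and the two-sided neighborhood of $\theta_*$ is produced. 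This contradicts the choice of $\theta_*$ and proves the lemma.
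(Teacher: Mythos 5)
Your strategy has a genuine gap at exactly the point you flag as ``the main obstacle,'' and that obstacle cannot be dispatched by genericity. In the contradiction scenario the point $\theta_*$ is, by construction, a one-sided boundary point of $\overline{\coscrA(V)}$ along $\pi$: points of the arc just on the $p$-side of $\theta_*$ really are outside $\overline{\coscrA(V)}$. Consequently \emph{no} choice of preimage $z^*$ and no branch $t(u)$ can make $\phi$ submersive at $u_*$ --- if it were, the image of $\phi$ would give a two-sided neighborhood of $\theta_*$ inside $\coscrA(V)\cap\pi$, which is impossible independently of the assumption on $\scrP^\infty(V)$. So your dichotomy collapses: the ``critical case'' is the only case, and the whole lemma is equivalent to the unproven assertion that such critical values along $\pi$ force points of $\scrP^\infty(V)$ on the arc. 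The arguments you offer for that assertion do not hold up: the locus of critical values of $\Arg|_V$ (the contour of the coamoeba) is typically of dimension $n-1$, not $\le n-2$, since it contains the attained part of $\partial\overline{\coscrA(V)}$; and you are not free to move $\pi$ into ``generic position,'' because $\pi$ is fixed in the statement --- the only genericity available is that the single subgroup $a\C^\times_N$ meets $V$ transversally. For the same reason the implicit-function-theorem step is shaky: you need the specific translate $(ae^{u_*})\C^\times_N$ through $z^*$ to meet $V$ transversally at $z^*$, and Kleiman's theorem only gives this for generic translates, while $u_*$ is pinned by $\theta_*$; ``by continuity we may assume transversality at $u_*$'' is not a valid move.

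The paper avoids local analysis of the argument map altogether. Assuming $\scrP^\infty(V)\cap\arc{pq}=\emptyset$, it first deduces that $V\cap\Arg^{-1}(\arc{pq}^\circ)$ is \emph{compact} (an unbounded sequence would produce a phase limit point on the arc). Writing $\Arg^{-1}(\arc{pq})\simeq\R^{n-1}_>\times S_{pq}$ with $S_{pq}\subset\C^\times_N$ the sector over the arc, it then builds a closed $2$-cycle $c=c_1+c_2+c_3$ (a disc $c_1\subset\C^\times_N$ bounded by a loop $\gamma$ around the compact projection of $V$, a tube $[1,a]\gamma$, and the far translated disc with reversed orientation) lying in the contractible set $\R^{n-1}_>\times S_{pq}$, hence null-homologous, yet having strictly positive intersection number with $V$ because $c_1$ meets $V$ transversally in the nonempty finite set $S_{pq}\cap V$ while $c_2,c_3$ miss $V$. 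That global intersection-theoretic contradiction is the mechanism your proposal is missing; if you want to salvage your approach you would need an independent proof that attained boundary points of $\coscrA(V)$ on $\pi$ which are not phase limit points cannot be one-sided, and that is essentially the lemma itself.
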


\begin{proof}
 Applying a translation if necessary, we may assume that $\pi$ is a one-dimensional subgroup
 $\T_N$ of $\T^n$ and that $V$ meets the corresponding subgroup $\C^\times_N$ of
 $(\C^\times)^n$ transversally in (at least) a point with argument in the interior of the
 arc $\arc{pq}$.  
 Let $\defcolor{S_{pq}}\subset \C^\times_N$ be its sector of points with argument in the arc
 $\arc{pq}$. 
 Identify $\Arg^{-1}(\arc{pq})$ with the product $\R^{n-1}_>\times S_{pq}$, where
 \defcolor{$\R_>$} is the set of strictly positive real numbers and $\R^{n-1}_>$ is $(\R_>)^{n-1}$.

 Suppose by way of contradiction that the phase limit set
 $\scrP^\infty(V)$ does not meet the arc $\arc{pq}$.
 As $\scrP^\infty(V)$ consists of accumulation points of unbounded sequences in $V$, we
 have that $\Arg^{-1}(\arc{pq}^\circ)\cap V$ is compact, where $\arc{pq}^\circ$
 is the relatively open arc, $\arc{pq}\smallsetminus\{p,q\}$.
 Otherwise, there is an unbounded sequence in $V$ whose arguments lie in $\arc{pq}^\circ$.
 This sequence has a subsequence whose arguments converge to a point in $\arc{pq}$, which
 is necessarily in $\scrP^\infty(V)$, contradicting our assumption that $\scrP^\infty(V)$
 does not meet $\arc{pq}$.

 Writing $S^\circ_{pq}$ for the open sector, we have 
 $\Arg^{-1}(\arc{pq}^\circ)=\R^{n-1}_>\times S^\circ_{pq}$, and thus the projections
 \defcolor{$K$} and \defcolor{$K_{pq}$} of $\Arg^{-1}(\arc{pq}^\circ)\cap V$ to
 $\R^{n-1}_>$ and $S^\circ_{pq}$ (respectively) are compact.
 Note that $1\in K$ as $\emptyset\neq S_{pq}\cap V\subset \C^\times_N\cap V$.
 Figure~\ref{F:sectors} displays a schematic diagram of the 
\begin{figure}[htb]
  \begin{picture}(102,102)
    \put(0,0){\includegraphics{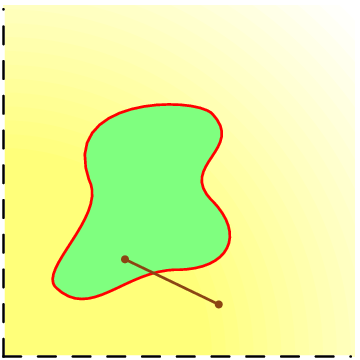}}
    \put(38,50){$K$}
    \put(29,26){$1$} \put(66,13){$a$}
    \put(68,82){$\R^{n-1}_>$}
  \end{picture}
   \qquad\qquad
  \begin{picture}(158,102)
   \put(19,0){\includegraphics{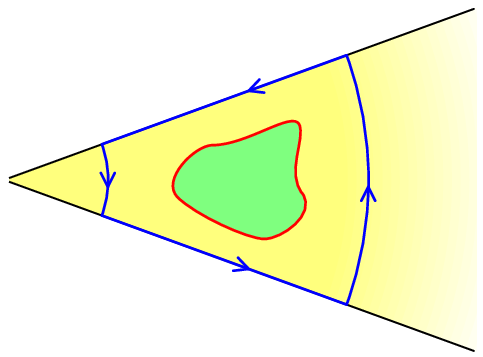}}
   \put(145,88){$q$} \put(135,47){$S_{pq}$}
   \put(145,12){$p$}
   \put(82,47){$K_{pq}$}
   \put(75,77){$\gamma$}
   \put(15,80){$\C^\times_N$}
  \end{picture}
 \caption{Projections of $\Arg^{-1}(\protect\arc{pq}^\circ)\cap V$.}
 \label{F:sectors}
\end{figure}
 two sets $K\subset\R^{n-1}_>$ and  $K_{pq}\subset \C^\times_N$.
 
Let $\defcolor{\gamma}\subset S_{pq}$ be a 1-cycle enclosing $K_{pq}$ that is positively
oriented with respect to the orientation of $\C^\times_N$, as shown in Figure~\ref{F:sectors}. 
Then no translate $a\gamma$ for $a\in\R^{n-1}_>$ can meet $V$.
We construct a 2-cycle $c\subset (\C^\times)^n$ that is closed, $\partial c=0$, such that
the topological intersection number $[c]\cdot V$ of the homology class $[c]$ with $V$ is
nonzero. 
This will be a contradiction for $c$ will be contractible in $(\C^\times)^n$ and thus
$[c]=0$. 
Technically, we need this intersection to take place in a compact manifold.
To that end, let $\overline{V}\subset\P^n$ be its closure.
Then, in $H_*(\P^n,\Z)$, we have $[c]\cdot\overline{V}\neq 0$, but as $c$ is
contractible its class in homology is zero, so $[c]\cdot \overline{V}=0$, which is a
contradiction. 
As $c\subset(\C^\times)^n$, the intersection number $[c]\cdot\overline{V}$ may be computed
using the cycle $c$ and subvariety $V$ of the torus.
See~\cite[pp.~49--53]{GH94} for a discussion of intersection numbers in homology.

Since $\gamma$ does not meet $V$, the intersection $\C^\times_N\cap V$ of algebraic subvarieties
of $(\C^\times)^n$ consists of finitely many points, which we illustrate below.
\[
    \begin{picture}(182,102)(-27,0)
    \put(-27,60){$\C^\times_N$}
    \put(-17,0){\includegraphics{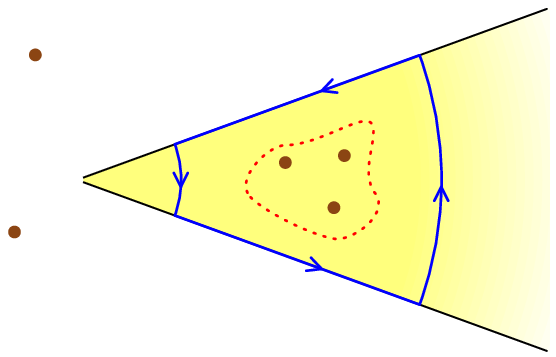}}
    \put(145,88){$q$} \put(135,47){$S_{pq}$}
    \put(145,12){$p$}
    \put(75,77){$\gamma$}
    \put(12, 9){$V\cap\C^\times_N$}
    \put(17,20){\line(-1,1){13.7}}
    \put(24,20){\line(-1,4){16}}
    \put(31,20){\line(4,3){45.3}}
    \put(39,20){\line(3,2){54.5}}
    \put(47,20){\line(2,1){43.5}}
   \end{picture}
\]
Let $\defcolor{c_1}\subset\C^\times_N$ be the 2-chain enclosed by $\gamma$,
with orientation inherited from $\C^\times_N$.
Then, as $\C^\times_N$ meets $V$ transversally, the intersection number of $c_1$ with $V$
is simply the number of points in $c_1\cap V$, which is positive as 
$c_1\cap V = S_{pq}\cap V\neq\emptyset$.

Let $\defcolor{a}\in\R^{n-1}_>\smallsetminus K$ so that $a\C^\times_N\cap V=\emptyset$.
Let \defcolor{$[1,a]$} be the straight line path in $\R^{n-1}_>$ between $1$ and $a$, and
set $\defcolor{c_2}:=[1,a]\gamma\simeq [1,a]\times\gamma$, oriented so that $\gamma$ with its
opposite orientation is one component of its boundary (the other is $a\gamma$).
Finally, let \defcolor{$c_3$} be $ac_1$, but with the opposite orientation to $c_1$ and set 
$\defcolor{c}:=c_1+c_2+c_3$.
Then 
\[
   \partial c \ =\ \gamma\, -\, \gamma + a\gamma\, -\,a\gamma\ =\ 0\,, 
\]
so $c$ is a cycle.
Furthermore, we have
\[
   c\cdot V\ =\ c_1\cdot V\, +\, c_2\cdot V\, +\, c_3\cdot V\ =\ c_1\cdot V\ >\ 0\,,
\]
 as $c_2$ and $c_3$ are disjoint from $V$.
 Lastly, as $c\subset \R^{n-1}_>\times S_{pq}$, it is contractible.
\end{proof}

\begin{corollary}\label{C:shell}
  With the above definitions, $\arc{pq}^\circ$ meets the shell of $V$.
\end{corollary}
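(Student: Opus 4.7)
The plan is to argue by induction on $n$, the ambient torus dimension, using Lemma~\ref{Lem:hypersurface} as the key input. The base case is $n=2$: when $V$ is a curve in $(\C^\times)^2$, every non-zero cone of a tropical fan $\Sigma$ for $V$ is maximal (of dimension $1=\dim V$), so $\scrP^{\infty}(V)$ coincides with ${\it sh}(V)$, and the conclusion of Lemma~\ref{Lem:hypersurface} is already the desired one.

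For $n\geq 3$, first invoke Lemma~\ref{Lem:hypersurface} to obtain $\theta_0\in\arc{pq}^\circ\cap\coscrA(\ini_\sigma V)$ for some non-zero cone $\sigma$ of $\Sigma$. If $\sigma$ is maximal in $\Sigma$ we are done, since then $\coscrA(\ini_\sigma V)\subset{\it sh}(V)$; so assume $\sigma$ is not maximal. Then $\ini_\sigma V$, being $\C^\times_{\langle\sigma\rangle}$-equivariant, descends under the quotient $(\C^\times)^n\to(\C^\times)^n/\C^\times_{\langle\sigma\rangle}$ to a hypersurface $W$ in a torus of strictly smaller dimension. Writing $q\colon\T^n\to\T^n/\T_{\langle\sigma\rangle}$ for the corresponding map on coamoebas, we have $\coscrA(\ini_\sigma V)=q^{-1}(\coscrA(W))$, and the maximal cones of a tropical fan for $W$ are exactly $\tau/\langle\sigma\rangle$ for the maximal cones $\tau$ of $\Sigma$ containing $\sigma$; in particular $\coscrA(\ini_\tau V)=q^{-1}(\coscrA(\ini_{\tau/\langle\sigma\rangle}W))\subset{\it sh}(V)$ for each such $\tau$.

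I would then apply the inductive hypothesis to $W$ and the image of $\pi$. First, $\pi\not\subset\T_{\langle\sigma\rangle}$: otherwise the $\T_{\langle\sigma\rangle}$-invariance of $\coscrA(\ini_\sigma V)$ would place the whole circle $\pi$ inside $\overline{\coscrA(V)}$, contradicting $p\notin\overline{\coscrA(V)}$. Hence $q(\pi)$ is $1$-dimensional. Using that $\overline{\coscrA(W)}=q\bigl(\bigcup_{\tau\supseteq\sigma}\coscrA(\ini_\tau V)\bigr)$ and that each $\coscrA(\ini_\tau V)$ is $\T_{\langle\sigma\rangle}$-saturated (as $\langle\sigma\rangle\subset\langle\tau\rangle$), the hypothesis $p,q\notin\overline{\coscrA(V)}$ forces $q(p),q(q)\notin\overline{\coscrA(W)}$, and $q(\theta_0)\in q(\arc{pq}^\circ)\cap\coscrA(W)$ is clear. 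The inductive hypothesis then yields some $q(\theta_1)\in q(\arc{pq}^\circ)\cap{\it sh}(W)$, which lifts to a point $\theta_1\in\arc{pq}^\circ\cap\coscrA(\ini_\tau V)\subset{\it sh}(V)$ for a maximal cone $\tau\supset\sigma$, completing the inductive step.

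The main obstacle is the transversality hypothesis inherent in Lemma~\ref{Lem:hypersurface}: the inductive application of the corollary demands that $q(\pi)$ be the coamoeba of a subgroup meeting $W$ transversally, which is not automatic from transversality of $a\C^\times_N$ with $V$. I would address this via Kleiman's transversality theorem: among small translates of the given $\pi$, those simultaneously meeting $V$ and every relevant initial variety $\ini_\sigma V$ transversally form a dense open set, and both the hypothesis (endpoints off $\overline{\coscrA(V)}$ and arc meeting $\coscrA(V)$) and the fact that ${\it sh}(V)$ is closed mean that conclusions on perturbed data pass to the original data by a compactness/limit argument. Replacing the triple $(\pi,p,q)$ by a suitable perturbation sets up the inductive step cleanly and recovers the statement for the original data.
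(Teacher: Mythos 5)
Your proof is essentially a reorganization of the paper's argument as an induction on the ambient dimension $n$, whereas the paper's proof achieves the same reduction in a single contradiction step. Both arguments share the same core mechanism: pass to the quotient $W=(\ini_\sigma V)/\C^\times_{\langle\sigma\rangle}$ of an initial variety by the maximal torus acting on it, observe that the images of $p,q$ stay off $\overline{\coscrA(W)}$ by $\T_{\langle\sigma\rangle}$-saturation, and apply Lemma~\ref{Lem:hypersurface} in the quotient torus. The difference is in how the recursion is short-circuited. The paper chooses $\sigma$ to be a cone of \emph{maximal dimension} among those whose initial coamoeba $\coscrA(\ini_\sigma V)$ meets $\arc{pq}$; Lemma~\ref{Lem:hypersurface} forces $\sigma\neq 0$, the assumption that $\arc{pq}$ avoids the shell forces $\sigma$ non-maximal in $\Sigma$, and applying Lemma~\ref{Lem:hypersurface} once more in the quotient produces a $\tau\supsetneq\sigma$ whose initial coamoeba meets the arc, contradicting the maximal choice. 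Your version instead takes \emph{any} nonzero $\sigma$ supplied by the lemma, descends, and invokes the corollary inductively, with the $n=2$ base case observing that $\scrP^\infty(V)={\it sh}(V)$ there. Your approach is somewhat more transparent about the recursive structure; the paper's extremal choice of $\sigma$ is more economical and avoids having to track the inductive data (quotient arc, endpoints, transversality of the image of $\pi$ with $W$) through arbitrarily many levels. On the latter point you correctly flag that Lemma~\ref{Lem:hypersurface} requires a transversality hypothesis that is not automatically inherited by the quotient; your Kleiman-perturbation-plus-compactness fix is the right idea but needs to be carried out carefully — in particular, after perturbing $\pi$ (and hence $p,q$) you should verify that the open arc still meets $\coscrA(V)$ (not merely its closure), and that a limit of points of ${\it sh}(V)$ in the perturbed open arcs lands in $\arc{pq}^\circ$ and not at an endpoint (the latter follows because ${\it sh}(V)$ is closed and $p,q\notin\overline{\coscrA(V)}\supset{\it sh}(V)$). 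The paper's proof does not explicitly address this transversality bookkeeping either, so raising it is a genuine improvement in rigor.
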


\begin{proof}
 Suppose this is not the case.
 Let $\Sigma$ be a tropical fan for $V$, and let $\sigma\in\Sigma$ be a cone of maximal
 dimension such that $\coscrA(\ini_\sigma V)\cap \arc{pq}\neq\emptyset$.
 By Lemma~\ref{Lem:hypersurface}, $\sigma\neq 0$ and so by the maximality of $\sigma$ we
 have that $\C^\times_{\langle\sigma\rangle}$ is the largest subtorus of $(\C^\times)^n$ that acts
 on $\ini_\sigma V$. 
 Also, $\sigma$ is not a maximal cone of $\Sigma$ as $\arc{pq}$ does not meet the shell of
 $V$, and the shell of $V$ consists of the initial coamoebas $\coscrA(\ini_\tau V)$ for
 $\tau$ a maximal cone in $\Sigma$.  

 Replace $V$ by $\ini_\sigma V$.
 Then $\C^\times_{\langle \sigma\rangle}$ acts on $V$ and its coamoeba 
 $\T_{\langle \sigma\rangle}$ acts on $\coscrA(V)$.
 Consider $W:=V/\C^\times_{\langle \sigma\rangle}\subset (\C^\times)^n/\C^\times_{\langle \sigma\rangle}$
 as well as its coamoeba $\coscrA(W)=\coscrA(V)/\T_{\langle \sigma\rangle}$ in  
 $\T'=\T^n/\T_{\langle \sigma\rangle}$.
 As in the proof of Lemma~\ref{Lem:hypersurface}, we may assume that there is a
 one-dimensional subgroup $\T_N$ of $\T^n$ with $\pi=\T_N$ and so $p,q\in\T_N$.
 Since $p,q\not\in\coscrA(V)$, we have that $V\cap \C^\times_N$ is finite.
 
 Let $\pi'$, $p'$, and $q'$ be the images of $\pi$, $p$, and $q$ in $\T'$, respectively.
 Since $V\cap\C^\times_N$ is finite and $V$ is $\C^\times_{\langle\sigma\rangle}$-equivariant, the
 image of $\C^\times_N$ in the quotient $(\C^\times)^n/\C^\times_{\langle\sigma\rangle}$
 is nontrivial, so that $\pi'$ is a one-dimensional subgroup of $\T'$.
 Then the image of the arc $\arc{pq}$ is contained in an arc
 $\barc{p'q'}$ of $\pi'$ which meets $\coscrA(W)$.
 By Lemma~\ref{Lem:hypersurface}, this arc then meets $\scrP^{\infty}(W)$, and thus the
 coamoeba of some initial variety of $W$.

 The inverse image of this initial variety under the map
 $(\C^\times)^n\twoheadrightarrow(\C^\times)^n/\C^\times_{\langle\sigma\rangle}$  is an initial variety
 $\ini_\tau V$ with $\sigma\subsetneq\tau$, which contradicts the maximality of $\sigma$.
\end{proof}

The proof of the general case follows the proof for hypersurfaces.

\begin{lemma}
 Suppose that $V\subset(\C^\times)^n$ has pure codimension $k{+}1$.
 Let $\pi\subset\T^n$ be an affine subgroup of dimension $k{+}1$ that is the coamoeba of
 an affine subgroup of $(\C^\times)^n$ that meets $V$ transversally and suppose that 
 $\gamma\in Z_k(\pi\smallsetminus\coscrA(V))$ is a nonzero cycle which bounds a positive 
 chain $\lambda\in\Pl_{k+1}(\pi)$ whose support meets $\coscrA(V)$.
 Then the support of $\lambda$ meets the phase limit set $\scrP^\infty(V)$ of $V$.
\end{lemma}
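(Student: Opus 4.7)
The plan is to adapt the proof of Lemma~\ref{Lem:hypersurface} to codimension $k{+}1$. I argue by contradiction, assuming $\supp(\lambda)\cap\scrP^{\infty}(V)=\emptyset$; since $\supp(\gamma)\subset\supp(\lambda)$ and $\supp(\gamma)$ already avoids $\coscrA(V)$, this also forces $\supp(\gamma)\cap\overline{\coscrA(V)}=\emptyset$. After translating in $\T^n$ I may take $\pi=\T_N$, and after translating $V$ in a complementary direction I may assume $V$ meets $\C^\times_N$ transversally in a finite set, one of whose points has argument in $\supp(\lambda)^{\circ}$. Fix a rational complementary subgroup so that $(\C^\times)^n=\C^\times_N\times\C^\times_M$ and let $\R^{n-k-1}_{>}\subset\C^\times_M$ be its positive real subgroup.

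Because $\lambda$ is positive, the topological boundary of $\supp(\lambda)$ equals $\supp(\gamma)$, which lies in $\pi\smallsetminus\overline{\coscrA(V)}$. This together with the contradiction hypothesis shows $V\cap\Arg^{-1}(\supp(\lambda))$ is closed and bounded in $(\C^\times)^n$, hence compact. Writing $\Arg^{-1}(\supp(\lambda))=\R^{n-k-1}_{>}\times S_{\lambda}$ with $S_{\lambda}:=\Arg^{-1}(\supp(\lambda))\cap\C^\times_N$, let $K\subset\R^{n-k-1}_{>}$ and $K_{\lambda}\subset S_{\lambda}^{\circ}$ be the projections of $V\cap\Arg^{-1}(\supp(\lambda)^{\circ})$ to the two factors; both are compact and $1\in K$.

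I then construct a $2(k{+}1)$-cycle $c=c_1+c_2+c_3$ in $(\C^\times)^n$ as in the hypersurface case: take $c_1$ to be a closed $2(k{+}1)$-ball in $S_{\lambda}^{\circ}$ (with its complex orientation from $\C^\times_N$) whose interior contains $K_{\lambda}$ and whose boundary $\gamma':=\partial c_1$ misses $V$; choose $a\in\R^{n-k-1}_{>}\smallsetminus K$ and a path $[1,a]$ in $\R^{n-k-1}_{>}$, and set $c_2:=[1,a]\cdot\gamma'$, $c_3:=-a\cdot c_1$. A direct boundary computation gives $\partial c=0$. Transverse intersection of complex subvarieties yields $c_1\cdot V=|V\cap c_1|\geq 1$ with each local index $+1$, while $c_2$ and $c_3$ miss $V$ because their projections to $S_{\lambda}$ and $\R^{n-k-1}_{>}$ avoid $K_{\lambda}$ and $K$ respectively; thus $c\cdot V>0$.

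To finish, I show $[c]=0$ after compactification. The cycle $c$ lies in $R:=\R^{n-k-1}_{>}\times S_{\lambda}^{\circ}$, which in $\Log\times\Arg$ coordinates is homotopy equivalent to $\supp(\lambda)^{\circ}$; since $\dim\supp(\lambda)^{\circ}=k{+}1<2(k{+}1)$, we have $H_{2(k{+}1)}(R)=0$. Hence $[c]=0$ in $H_{2(k{+}1)}(R)$ and thus in $H_{2(k{+}1)}((\C^\times)^n)\to H_{2(k{+}1)}(\P^n)$, giving $[c]\cdot[\overline V]=0$. But since $c\cap\overline V\subset(\C^\times)^n$, this intersection pairing equals the signed count $c\cdot V>0$, a contradiction. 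The principal obstacle beyond Nisse's hypersurface argument lies in this last step: for hypersurfaces Nisse inferred contractibility of $c$ from contractibility of an open sector in $\C^\times_N$, but $\supp(\lambda)^{\circ}$ may in general be topologically nontrivial, so that strategy fails; the key new observation is that a dimension count still kills the homology class, because $c$ has real dimension $2(k{+}1)$ while its ambient region deformation retracts onto the much lower-dimensional $\supp(\lambda)^{\circ}$.
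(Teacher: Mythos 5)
Your overall strategy is the same as the paper's: argue by contradiction, extract compactness of $V\cap\Arg^{-1}(\supp(\lambda)^\circ)$, and build a $(2k{+}2)$-cycle $c$ that is null-homologous yet has positive intersection number with $V$. You also correctly identify the new difficulty over the hypersurface case, namely that $\supp(\lambda)^\circ$ can be topologically nontrivial. However, there is a genuine gap in where you address that difficulty.

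The gap is in the construction of $c_1$. You want $c_1$ to be a closed $2(k{+}1)$-ball inside $S_\lambda^\circ$ whose interior contains the compact set $K_\lambda$. But $S_\lambda^\circ\simeq\R^{k+1}_>\times\supp(\lambda)^\circ$ has the homotopy type of $\supp(\lambda)^\circ$, and $K_\lambda$ is essentially $V\cap\C^\times_N\cap\Arg^{-1}(\supp(\lambda)^\circ)$ which can wrap around its nontrivial cycles; in that situation no ball in $S_\lambda^\circ$ encloses $K_\lambda$. So the very phenomenon you flag (nontrivial topology of $\supp(\lambda)^\circ$) already obstructs your first step, not only the final homology step. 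You then patch the final step by a dimension count ($H_{2(k+1)}(R)=0$ because $R$ retracts to the $(k{+}1)$-dimensional $\supp(\lambda)^\circ$), which is correct but is repairing a step that did not need repair: your $c$ is tautologically a boundary, $c=\pm\partial\bigl([1,a]\times c_1\bigr)$, so $[c]=0$ in any ambient space with no dimension count needed.

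The paper's construction sidesteps the ball problem by taking the cap to be the product $D\times\lambda$, where $D\subset\R^{k+1}_>$ is a compact triangulated region (in a contractible space, so no obstruction) whose interior contains the projection $K_N$ of $V\cap\Arg^{-1}(\supp(\lambda)^\circ)$ to $\R^{k+1}_>$. Then $\partial(D\times\lambda)=\partial D\times\lambda\,\pm\, D\times\partial\lambda$ misses $V$ for two independent reasons: $\partial D$ avoids $K_N$, and $\partial\lambda=\gamma$ avoids $\coscrA(V)$. Taking $\mu=[1,a]\times D\times\lambda$ immediately gives $[\partial\mu]=0$, and $D\times\lambda$, being a positive chain of the right real dimension meeting the complex subvariety $V$ transversally, gives a strictly positive intersection count. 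Your proof can be repaired by replacing "ball'' with any compact, positively oriented triangulated $(2k{+}2)$-chain in $S_\lambda$ whose interior contains $K_\lambda$ and whose boundary avoids $V$ (for example the closure of a small $\epsilon$-neighborhood of $K_\lambda$), but the paper's product form $D\times\lambda$ is cleaner and is what makes the boundary verification and the orientation argument work without further care.
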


\begin{proof}
 Write \defcolor{$|\lambda|$} for the support of the chain $\lambda$.
 Since $\lambda$ is a positive chain, this is simply the union of the images of the
 simplices in $\lambda$, and the boundary of $|\lambda|$ is a subset of the support of
 $\partial\lambda$. 
 In particular, the boundary of $|\lambda|$ does not meet $\coscrA(V)$, but its relative interior
 $|\lambda|^\circ$ does.
 Applying a translation if necessary, we may assume that $\pi=\T_N$ and that $V$ meets
 $\C^\times_N$ transversally and at least one point in $V\cap\C^\times_N$ has argument in
 $|\lambda|^\circ$. 

 Suppose that $\scrP^\infty(V)\cap|\lambda|=\emptyset$.
 Then $V\cap\Arg^{-1}(|\lambda|^\circ)$ is compact, for otherwise there is an unbounded
 sequence in $V$ whose arguments lie in the compact set $|\lambda|$, and thus there is a
 subsequence whose arguments converge to a point of $\scrP^\infty(V)\cap|\lambda|$, a
 contradiction. 

 Write \defcolor{$S_\lambda$} for $\Arg^{-1}(|\lambda|)\cap\C^\times_N$, which is isomorphic
 to $\R^{k+1}_>\times|\lambda|$.
 Then
\[
   \Arg^{-1}(|\lambda|)\ \simeq\ 
   \R^{n-k-1}_>\times S_\lambda\ \simeq\
   \R^{n-k-1}_>\times\R^{k+1}_>\times|\lambda|\,.
\]
 Projecting $V\cap\Arg^{-1}(|\lambda|)$ to the first and second factors gives compact sets
 $K\subset\R^{n-k-1}_>$ and $K_N\subset\R^{k+1}_>$.
 Let $D\subset\R^{k+1}_>$ be a compact triangulated set whose interior contains $K_N$,
 so that $D\times|\lambda|$ is a compact subset of $\C^\times_N$ whose interior
 contains the projection of $V\cap\Arg^{-1}(|\lambda|)$ to $\C^\times_N$. 
 We may do this as the boundary of $\lambda$ is disjoint from $\coscrA(V)$ and the boundary
 of $D$ is disjoint from $K_N$.

 We have that $1\in K$ as $V$ meets $\C^\times_N$ in a point whose argument lies in
 $|\lambda|$. 
 Let $a\in \R^{n-k-1}\smallsetminus K$ and consider a path $[1,a]$ from $1$ to $a$ in
 $\R^{n-k-1}$. 
 We assume that $D$ is oriented so that the orientation of $D\times|\lambda|$ agrees with
 that of $\C^\times_N$.
 As $D$ is triangulated, we regard it as a chain which is the sum of the oriented
 simplices that cover it, each with coefficient 1.
 Then $\mu:=[1,a]\times (D\times\lambda)$ is a $(2k{+}3)$-chain whose boundary is 
 \begin{equation}\label{Eq:bdryRho}
   \partial\mu\ =\ 
     \bigl(\{1\}\times (D\times\lambda)\bigr)
    \ \bigcup\ 
    \bigl([1,a]\times \partial(D\times\lambda)\bigr)
    \ \bigcup\ 
    \bigl(\{a\}\times (D\times\lambda)\bigr)\,,
 \end{equation}
 where the factors $D\times\lambda$ in the first and third terms have opposite orientations.

 Consider the intersection number of the cycles $\partial\mu\cdot V$.
 As only the first component $(D\times\lambda)=\{1\}\times (D\times\lambda)$
 of $\partial\mu$ in~\eqref{Eq:bdryRho} meets $V$, 
 we have that $\partial\mu\cdot V=(D\times\lambda)\cdot V$.  
 Since $D\times\lambda$ is a positive chain of dimension $2k{+}2$ in $\C^\times_N$ and $V$ is a
 complex subvariety of $(\C^\times)^n$ of complex codimension $k{+}1$, they meet
 transversally. 
 Consequently, this intersection number is the sum over all points $p$ of 
 $(D\times|\lambda|)\cap V$ of the multiplicity of the chain $D\times\lambda$ at $p$.
 Each of these contributions is positive and the sum is nonempty, as we assumed that 
 $V$  meets $\C^\times_N$ in a point whose argument lies in $|\lambda|^\circ$.
 This is a contradiction, for the intersection number of $\partial\mu$ with $V$ is zero
 as $[\partial\mu]=0$ in $H_{2k+1}((\C^\times)^n)$.
\end{proof}

Virtually the same proof as that for Corollary~\ref{C:shell} gives a proof of
Lemma~\ref{L:crucial}. 

\section{Nonarchimedean coamoeba}\label{S:three}

We define nonarchimedean coamoebas and sketch their basic properties as given in~\cite{NSna}.
Suppose that $\K$ is a complete field with a nonarchimedean valuation $\nu$ and residue field $\C$.
That is, there is a surjective group homomorphism $\nu\colon\K^\times\to\Gamma$, where $\Gamma$ is a
totally ordered divisible group. 
If we define $\nu(0):=\infty$ to be greater than any element of $\Gamma$, then $\nu$ is a 
nonarchimedean valuation in that if $a,b\in \K$, we have 
$\nu(a{+}b)\geq \min\{\nu(a),\nu(b)\}$ with equality unless $\nu(a)=\nu(b)$. 
Finally, if we set 
\[
   R\ :=\ \{a\in \K\mid \nu(a)\geq 0\}
   \qquad\mbox{and}\qquad
   \mathfrak{m}\ :=\ \{a\in \K\mid \nu(a)>0\}\,,
\]
then $\mathfrak{m}$ is the maximal ideal of the local ring $R$ and $R/\mathfrak{m}\simeq\C$.

A subvariety $V\subset(\K^\times)^n$ has a \demph{nonarchimedean amoeba $\scrT(V)$} which is its image in
$\Gamma^n$ under the coordinatewise valuation map. 
For each point $w\in\Gamma^n$, there is a tropical reduction $\ini_wV$ of $V$, which 
is a subvariety of an $n$-dimensional complex torus.
Identifying this torus with $(\C^\times)^n$ requires the choice of a section of the valuation
homomorpism.
Making this choice, the \demph{nonarchimedean coamoeba $\nca(V)$} of $V$ is the image of these tropical
reductions under the argument map to $\T^n$, and is therefore the union of the coamoebas of all tropical
reductions of $V$.
\begin{example}\label{Ex:tropical_space_line}
 Let $\zeta$ be a primitive third root of unity, $\omega:=1+\sqrt{-1}$,
 and $t$ be an element of $\K^\times$ with valuation 1.
 Consider the line $\ell\subset(\K^\times)^3$ defined by
\[
   x+\zeta y + \zeta^2 t\ =\
   \sqrt{-1}\cdot x +z - \omega\ =\ 0\,.
\]
 We display its nonarchimedean amoeba, assuming that $\Gamma\subset\R$.
\[
  \begin{picture}(255,95)(-66,-1)
   \put(0,0){\includegraphics{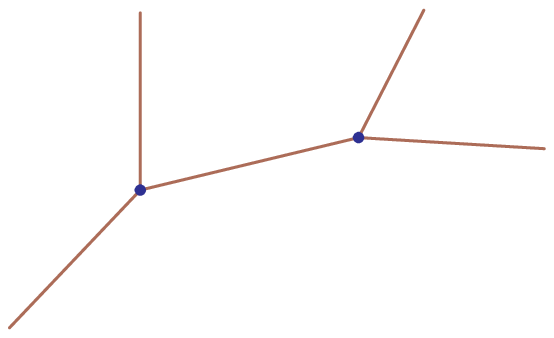}}
   \put(-6,40){$(0,0,0)$}    \put(82,40){$(1,1,0)$} 
   \put(-1,85){$(0,0,s)$}     \put(122,85){$(1,1{+}s,0)$}
   \put(-66,0){$(-s,-s,-s)$} \put(140,40){$(1{+}s,1,0)$}
  \end{picture}
\]
This has two vertices $(0,0,0)$ and $(1,1,0)$ connected by a line segment
($(s,s,0)$ for $0<s<1$) and four rays as indicated for $s>0$.
For each face of this polyhedral complex, $\ell$ has a different tropical reduction.
Figure~\ref{F:NA} shows two views of $\nca(\ell)$, which is the union of 
seven coamoebas, one for each tropical reduction of $\ell$.
\begin{figure}[htb]
  \includegraphics[height=150pt]{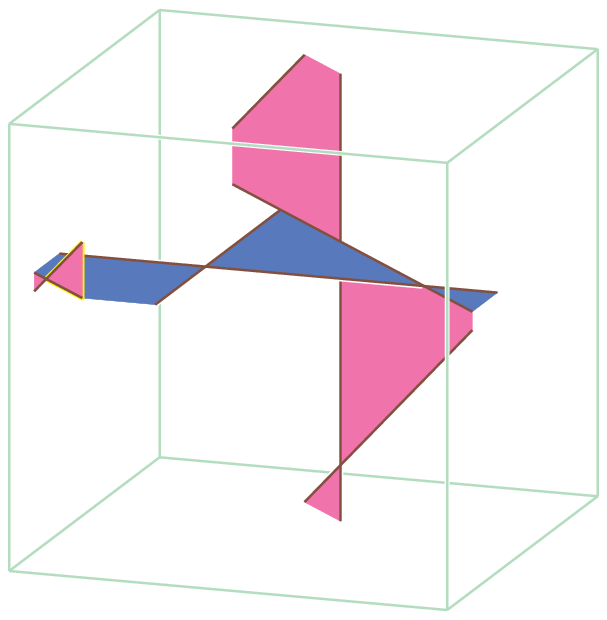}   \qquad
  \includegraphics[height=165pt]{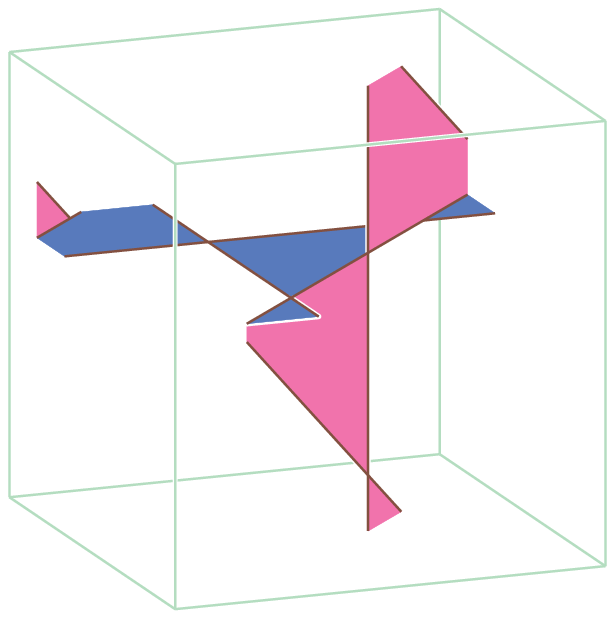} 
\caption{Non-archimedean coamoeba of a line}
\label{F:NA}
\end{figure}
It is the closure of the two corresponding to the vertices of $\scrT(\ell)$, each of
which is a coamoeba of a line in a plane consisting of two triangles.
Here $(0,0,0)$ corresponds to the vertical triangles and $(1,1,0)$ to the 
horizontal triangles.
These two coamoebas are attached along the coamoeba of the tropical reduction corresponding
to the edge between the vertices, and each has two boundary coamoebas corresponding to the
unbounded rays at each vertex.
\qed
\end{example}

The nonarchimedean coamoeba is a subset of $\T^n$.
Let $\defcolor{\lnca(V)}\subset\R^n$ be the lifted nonarchimedean coamoeba, its pullback to the
universal cover $\R^n$ of $\T^n$.

The tropical variety $\scrT(V)$ admits a structure of a polyhedral complex $\Sigma$ that
is compatible with tropical reductions in that if $\sigma\in\Sigma$ is a cone, then the tropical
reductions $\ini_wV$ are equal for all $w$ in the relative interior of $\sigma$.
Write \defcolor{$\ini_\sigma V$} for this common tropical reduction.
The structure of nonarchimedean coamoebas was described in~\cite{NSna}.

\begin{proposition}\label{P:ncoa}
 Let $V\subset(\K^\times)^n$ be a subvariety and fix a compatible polyhedral structure on its
 nonarchimedean amoeba $\scrT(V)$.
 The nonarchimedean coamoeba $\nca(V)$ is closed in $\T^n$.
 It is the (finite) union of archimedean coamoebas $\coscrA(\ini_\sigma V)$, for $\sigma\in\Sigma$.
 It is the closure of the union of archimedean coamoebas corresponding to the minimal faces of $\Sigma$.
 The same is true for the lifted nonarchimedean coamoeba, $\lnca(V)$.
\end{proposition}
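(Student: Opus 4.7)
The plan is to establish the four assertions of Proposition~\ref{P:ncoa} in the order they are stated, drawing on the structure theorem for coamoebas from Section~\ref{S:one} and the correspondence between faces of $\Sigma$ and tropical reductions that was discussed just before the proposition.

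First I would establish the finite union representation. By definition $\nca(V)=\bigcup_{w\in\scrT(V)}\coscrA(\ini_wV)$, and compatibility of the polyhedral structure $\Sigma$ with tropical reductions means that $\ini_wV=\ini_\sigma V$ whenever $w$ lies in the relative interior of $\sigma\in\Sigma$. Since $\scrT(V)$ is the disjoint union of these relative interiors and $\Sigma$ is finite (up to recession), this yields $\nca(V)=\bigcup_{\sigma\in\Sigma}\coscrA(\ini_\sigma V)$ as a finite union. (For $V$ a subvariety of the algebraic torus, $\Sigma$ is a finite polyhedral complex; only its recession fan could introduce an infinite family, but these are absorbed by taking further tropical reductions at the cones at infinity.)

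Next I would show closedness together with the description in terms of minimal faces. The key is to relate the phase limit set of one archimedean coamoeba to the archimedean coamoebas of other tropical reductions. Let $\sigma\in\Sigma$ and consider $\overline{\coscrA(\ini_\sigma V)}$. By the Proposition recalled in Subsection~\ref{SS:coamoeba},
\[
\overline{\coscrA(\ini_\sigma V)}\ =\ \coscrA(\ini_\sigma V)\cup\scrP^\infty(\ini_\sigma V)\ =\ \coscrA(\ini_\sigma V)\cup\bigcup_{w\neq 0}\coscrA(\ini_w(\ini_\sigma V)).
\]
The iterated initial variety $\ini_w(\ini_\sigma V)$ is itself a tropical reduction of $V$ at a point of $\scrT(V)$ belonging to a cone $\tau\in\Sigma$ with $\sigma\subsetneq\tau$, by the partial converse relating cones and initial varieties recalled in Section~\ref{S:one}. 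Hence $\overline{\coscrA(\ini_\sigma V)}\subset\bigcup_\tau\coscrA(\ini_\tau V)=\nca(V)$, which proves that $\nca(V)$ is closed. Running the same argument in reverse, if $\sigma$ is not a minimal face of $\Sigma$, pick a minimal face $\sigma_0\subset\sigma$; then $\ini_\sigma V$ arises as an initial variety of $\ini_{\sigma_0}V$, so $\coscrA(\ini_\sigma V)\subset\scrP^\infty(\ini_{\sigma_0}V)\subset\overline{\coscrA(\ini_{\sigma_0}V)}$. Taking the union over all $\sigma$ then gives $\nca(V)=\overline{\bigcup_{\sigma_0\text{ minimal}}\coscrA(\ini_{\sigma_0}V)}$.

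Finally, the statements for the lifted nonarchimedean coamoeba follow by pulling everything back along the covering map $\R^n\to\T^n$: the pullback commutes with finite unions and with taking closures (since the covering map is a local homeomorphism and the lifts are $\Z^n$-periodic), so each claim transfers directly. The main obstacle I anticipate is the partial converse step identifying $\ini_w(\ini_\sigma V)$ with $\ini_\tau V$ for some cone $\tau\supsetneq\sigma$ of the chosen polyhedral structure $\Sigma$; this requires invoking the full compatibility between the polyhedral structure on $\scrT(V)$ and iterated tropical reductions, which in turn rests on the fact that the polyhedral structure was chosen to be compatible and that $\scrT(\ini_\sigma V)$ coincides with the star of $\sigma$ in $\Sigma$. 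Once this is in hand, the rest of the argument is formal.
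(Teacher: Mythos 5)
The paper does not actually prove Proposition~\ref{P:ncoa}: it recalls it as established in~\cite{NSna} (``The structure of nonarchimedean coamoebas was described in~\cite{NSna}.''), so there is no in-paper argument to compare against. Your reconstruction is nevertheless sound, and it is the natural one.

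Your argument rests on two ingredients that the paper does supply or at least signals. The first is the recalled result of~\cite{NS}: $\overline{\coscrA(W)}=\coscrA(W)\cup\scrP^\infty(W)$ with $\scrP^\infty(W)=\bigcup_{w\neq0}\coscrA(\ini_wW)$, which you apply to each tropical reduction $W=\ini_\sigma V$. The second is the point you yourself flag as the potential obstacle: that an archimedean initial degeneration $\ini_w(\ini_\sigma V)$ is again a tropical reduction $\ini_\tau V$ for some cell $\tau$ of $\Sigma$ with $\sigma$ as a face. This is exactly the nonarchimedean analogue of the fan statement quoted in Subsection~\ref{SS:coamoeba} (``if $\sigma\subset\tau$, then $\ini_\tau V$ is an initial variety of $\ini_\sigma V$, and all initial varieties of $\ini_\sigma V$ occur in this way''), and it holds because $\scrT(\ini_\sigma V)$ is the star of $\sigma$; but neither fact is proved in this paper, only in~\cite{NSna}. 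Two small imprecisions worth tightening: (i) for $w$ in the linear span of $\sigma$ one has $\ini_w(\ini_\sigma V)=\ini_\sigma V$, so the cell $\tau$ need not strictly contain $\sigma$ --- this does not affect the conclusion, since $\coscrA(\ini_\sigma V)$ is already included in the union; (ii) the parenthetical ``up to recession'' in your first paragraph is muddled --- the point is simply that $\Sigma$ has finitely many cells, so the union is finite once $\ini_wV$ is constant on relative interiors. Your final reduction for $\lnca(V)$ is correct: the covering map $\R^n\to\T^n$ is open, so preimage commutes with closure as well as with finite unions.

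In short: correct and along the expected lines, but note that what you are supplying is a proof the paper itself delegates to~\cite{NSna}, and the crux you identify (compatibility of iterated initial degenerations with the polyhedral structure) is precisely the content being cited.
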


We state and prove our result on the higher convexity of complements of lifted nonarchimedean coamoebas.

\begin{theorem}
 Let $V\subset(\K^\times)^n$ be a subvariety of pure codimension $k{+}1$.
 Then the complement of its lifted nonarchimedean coamoeba is $k$ convex.
\end{theorem}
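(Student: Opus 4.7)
The plan is to reduce this statement to the archimedean theorem of Section~\ref{S:two} combined with the fact that finite intersections of $k$-convex sets are $k$-convex. The structural Proposition~\ref{P:ncoa} does most of the work: it expresses $\nca(V)$ as a finite union of archimedean coamoebas indexed by the cones of a compatible polyhedral structure $\Sigma$ on $\scrT(V)$.

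The first step is to pass to the universal cover. Since $\nca(V)$ is closed in $\T^n$ and equals the finite union $\bigcup_{\sigma \in \Sigma} \coscrA(\ini_\sigma V)$, pulling back under $\R^n \to \T^n$ and using that the pullback of a closure equals the closure of the pullback for a covering map gives
\[
   \lnca(V) \;=\; \bigcup_{\sigma \in \Sigma} \lcoscrA(\ini_\sigma V)\,,
\]
still a finite union.

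The second step is to note that each tropical reduction $\ini_\sigma V$ is a subvariety of $(\C^\times)^n$ of pure codimension $k{+}1$: since $V$ has pure codimension $k{+}1$, the polyhedral complex $\Sigma$ is pure of dimension $n{-}k{-}1$, and each $\ini_\sigma V$ has the same dimension as $V$. Consequently the main theorem of Section~\ref{S:two} applies to every $\ini_\sigma V$, and we conclude that for each $\sigma \in \Sigma$ the complement
\[
   \R^n \smallsetminus \lcoscrA(\ini_\sigma V)
\]
is $k$-convex.

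The third and final step is to take complements. We have
\[
   \R^n \smallsetminus \lnca(V) \;=\; \bigcap_{\sigma \in \Sigma}\bigl(\R^n \smallsetminus \lcoscrA(\ini_\sigma V)\bigr)\,,
\]
a \emph{finite} intersection of $k$-convex subsets of $\R^n$. By Proposition~\ref{P:intersection}, applied inductively, any finite intersection of $k$-convex subsets of $\R^n$ is $k$-convex, which yields the result.

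There is no serious obstacle here: once Proposition~\ref{P:ncoa} is available, the only point requiring any care is that the polyhedral complex $\Sigma$ be finite and that the tropical reductions preserve the codimension of $V$, both of which are standard features of the setup from~\cite{NSna} that the paper already uses. The substantive work all lives in the archimedean theorem of Section~\ref{S:two}, and the nonarchimedean statement is a formal corollary.
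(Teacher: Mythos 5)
Your proof is correct and follows essentially the same route as the paper: decompose $\lnca(V)$ into a finite union of lifted archimedean coamoebas via Proposition~\ref{P:ncoa}, apply the archimedean theorem to each piece (using that tropical reductions preserve codimension), and conclude by Proposition~\ref{P:intersection}. The only cosmetic difference is that the paper indexes the intersection over the \emph{minimal} cones of $\Sigma$ (using the closure formulation in Proposition~\ref{P:ncoa}) while you index over all cones; both give the same set, since $\lcoscrA(\ini_\sigma V)\subseteq\lcoscrA(\ini_\tau V)$ whenever $\tau\subseteq\sigma$, so the extra terms are redundant.
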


\begin{proof}
 Choosing a compatible polyhedral complex as in Proposition~\ref{P:ncoa}, we have
\[
   \R^n\smallsetminus\lnca(V)\ =\ 
     \bigcap_{\substack{\sigma\in\Sigma\\\sigma\mbox{\scriptsize minimal}}}
     \bigl(\R^n\smallsetminus\lcoscrA(\ini_\sigma V)\bigr)\,,
\]
 which is a finite intersection.
 Since each complement $\R^n\smallsetminus\lcoscrA(\ini_\sigma V)$ is $k$-convex,
 Proposition~\ref{P:intersection} implies that $\R^n\smallsetminus\lnca(V)$ is $k$-convex. 
\end{proof}

\def\cprime{$'$}
\providecommand{\bysame}{\leavevmode\hbox to3em{\hrulefill}\thinspace}
\providecommand{\MR}{\relax\ifhmode\unskip\space\fi MR }
\providecommand{\MRhref}[2]{%
  \href{http://www.ams.org/mathscinet-getitem?mr=#1}{#2}
}
\providecommand{\href}[2]{#2}


\end{document}